\DeclareMathOperator{\Min}{Min}
\DeclareMathOperator{\Cond}{Cond}
\DeclareMathOperator{\GL}{GL}
\DeclareMathOperator{\PGL}{PGL}
\DeclareMathOperator{\SL}{SL}
\DeclareMathOperator{\Aff}{Aff}
\DeclareMathOperator{\Rat}{Rat}
\DeclareMathOperator{\Res}{Res}
\DeclareMathOperator{\res}{res}
\DeclareMathOperator{\Gm}{\GG_m}
\newcommand{\adj}{\mathrm{adj}}
\renewcommand{\AA}{\mathbb{A}}
\newcommand{\GG}{\mathbb{G}}
\newcommand{\QQ}{\mathbb{Q}}
\newcommand{\PP}{\mathbb{P}}
\newcommand{\RR}{\mathbb{R}}
\newcommand{\ZZ}{\mathbb{Z}}
\newcommand{\calM}{\mathcal{M}}
\newcommand{\calO}{\mathcal{O}}
\newcommand{\fp}{\mathfrak{p}}
\newcommand{\fq}{\mathfrak{q}}
\newcommand{\mat}[1]{\begin{pmatrix}#1\end{pmatrix}}
\newtheorem{theorem}{Theorem}[section]
\newtheorem{lemma}[theorem]{Lemma}
\newtheorem{corollary}[theorem]{Corollary}
\newtheorem{proposition}[theorem]{Proposition}
\theoremstyle{definition}
\newtheorem{definition}[theorem]{Definition}
\newtheorem{question}[theorem]{Question}
\newtheorem{problem}[theorem]{Problem}
\newtheorem{conjecture}[theorem]{Conjecture}
\newtheorem{example}[theorem]{Example}
\newtheorem{algorithm}[theorem]{Algorithm}
\theoremstyle{remark}
\newtheorem{remark}[theorem]{Remark}
\newcommand{\IF}{\texttt{if}\;}
\newcommand{\RETURN}{\texttt{return}\;}
\newcommand{\FOR}{\texttt{for}\;}
\newcommand{\GOTO}{\texttt{goto}\;}
\newcommand{\tab}{\hspace*{2em}}
\newcommand{\In}{\mathrm{in}}
\newcommand{\total}{\mathrm{tot}}
\begin{document}
\title{Minimal models for rational functions in a dynamical setting}
\subjclass[2000]{37P05; 11S82}
\keywords{Rational functions, arithmetic dynamics, integer points in orbits, affine minimal}
\author{Nils Bruin}
\thanks{Research of first author supported by NSERC}
\address{Department of Mathematics, Simon Fraser University,
         Burnaby, BC V5A 1S6, Canada}
\email{nbruin@sfu.ca}
\urladdr{http://www.cecm.sfu.ca/\~{}nbruin}

\author{Alexander Molnar}
\address{Department of Mathematics, Queen's University,
         Kingston, ON K7L 3N6, Canada}
\email{a.molnar@queensu.ca}

\date{May 25, 2012}

\begin{abstract} We present a practical algorithm to compute models of rational functions with minimal resultant under conjugation by fractional linear transformations. We also report on a  search for rational functions of degrees $2$ and $3$ with rational coefficients that have many integers in a single orbit. We find several minimal quadratic rational functions with $8$ integers in an orbit and several minimal cubic rational functions with $10$ integers in an orbit. We also make some elementary observations on possibilities of an analogue of Szpiro's conjecture in a dynamical setting and on the structure of the set of minimal models for a given rational function.
\end{abstract}

\maketitle

\section{Introduction}

The results in this article are inspired by a conjecture by Silverman,
\begin{conjecture}\label{C:uniform_bound}
For each $d\geq 2$ there is a constant $C_d$ such that the following is true.
Let $\phi(z)\in\QQ(z)$ be a rational function of degree $d\geq 2$, such that $\phi^2$ is not a polynomial and for any $\alpha\in \QQ$, consider the \emph{orbit} of $\alpha$ under $\phi$, being
\[\calO_\phi(\alpha)=\{\alpha,\phi(\alpha),\phi(\phi(\alpha)),\ldots\}.\]
If $\phi$ is \emph{minimal} and $\calO_\phi(\alpha)$ is infinite as a set then
\[\#\{\beta\in \calO_\phi(\alpha): \beta\in\ZZ\} \leq C_d.\]
\end{conjecture}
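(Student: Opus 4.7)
The plan is to imitate the proof of Silverman's non-uniform theorem (for each fixed $\phi$, the set $\calO_\phi(\alpha) \cap \ZZ$ is finite) and to extract uniformity from the minimality hypothesis. Silverman's argument proceeds through heights: the hypothesis that $\phi^2$ is not a polynomial forces $\phi^n$ to have enough poles in $\PP^1(\Qbar)\setminus\{\infty\}$ that Roth's theorem, applied to the approximation of these poles by earlier iterates $\phi^m(\alpha)$, caps the number of $\beta=\phi^n(\alpha)$ that can lie in $\ZZ$. The first step would be to work this out quantitatively and to isolate exactly which features of $\phi$ (coefficient heights, reduction type, critical orbit structure) enter the bound.

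Next, for a candidate integer $\beta \in \calO_\phi(\alpha)$, I would decompose $\log|\beta|$ via the product formula applied to the local canonical height contributions $\hat\lambda_v(\beta,\infty)$ for the divisor at infinity. At each prime $p$, $\hat\lambda_p(\beta,\infty)$ is controlled by the $p$-adic dynamics of $\phi$, and minimality guarantees that $\phi$ has good reduction outside the primes dividing the minimal resultant $\Res(\phi)$. A lower bound on these local heights that depends only on the reduction type at $p$ would convert, via the product formula, into an upper bound on $\log|\beta|$ for integer $\beta$. Combined with the standard geometric growth $\hat h_\phi(\phi^n(\alpha))=d^n\hat h_\phi(\alpha)$ of the canonical height along the orbit, this would force $\log|\beta|$ to outrun the admissible range once $n$ is large, bounding the number of integer iterates.

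The main obstacle is exactly the uniformity in $\phi$, which is the true content of the conjecture and for which there is no precedent in current technology. The analogous fact for torsion on elliptic curves is Mazur--Merel, whose proof uses the algebraic geometry of modular curves; no corresponding modular interpretation is known for the moduli space $\calM_d$ of rational maps modulo conjugation. Even granting Roth-type inputs that are uniform in the approximant (Evertse, Corvaja--Zannier), one still needs a mechanism that prevents $\phi$ from being chosen so as to exhibit arbitrarily many integer iterates. The minimality hypothesis must supply this rigidity, presumably in the form of an effective dynamical Szpiro-type inequality bounding $\Res(\phi)$ in terms of a dynamical $\disc$-type invariant; such an inequality is itself conjectural and is alluded to in the abstract as part of the motivation for this paper. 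A realistic intermediate target is the statement restricted to $\phi$ with good reduction outside a fixed finite set $S$, with the constant $C_{d,S}$ then allowed to depend on $\#S$.
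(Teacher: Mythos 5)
This statement is a \emph{conjecture} (attributed to Silverman), and the paper does not prove it; it only supplies experimental lower bounds $C_2\geq 8$ and $C_3\geq 10$ by exhibiting minimal rational maps with many integral orbit elements, and discusses related structural questions (minimality testing, counterexamples to naive Szpiro analogues). There is therefore no paper proof to compare against, and you should not present a ``proof proposal'' for an open conjecture as if a proof were expected.

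That said, your write-up is honest on this point: you correctly diagnose that Silverman's original argument in \cite{sil:intpoints} gives finiteness for each fixed $\phi$ via Roth's theorem applied to approximation of the poles of $\phi^n$, that the product formula and local height decomposition control $\log|\beta|$, and that the genuinely missing ingredient is uniformity in $\phi$. You are also right that the minimality hypothesis is essential precisely because the conjecture is trivially false without it (scaling denominators out, as in the paper's Example based on $\phi(z)=(z^2+z+1)/(z^2-z+1)$, manufactures arbitrarily many integral iterates). Two cautions. First, your suggestion that an effective dynamical Szpiro inequality would supply the needed rigidity is in tension with Section~\ref{S:szpiro} of this very paper, which shows that the naive formulation of such an inequality (with conductor the radical of the minimal resultant) is \emph{false}: already $\phi(x)=(x^d+\pi^{n+1})/(h(x)x)$ is minimal by Remark~\ref{R:trivial_minimal} yet has resultant divisible by $\pi^{n+1}$. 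Any Szpiro-type input would need a substantially refined conductor, and it is not clear such a refinement exists. Second, the analogy to Mazur--Merel is suggestive but unlikely to transfer: $\calM_d$ is a coarse moduli space of rational maps with no known modular or arithmetic-geometric structure comparable to $X_0(N)$, so the geometric mechanism that makes Merel's theorem provable has no counterpart here. Your proposed intermediate target (uniformity with $C_{d,S}$ depending on the set $S$ of bad primes) is a reasonable weakening, but even that remains open.
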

The conjecture is a direct translation of a conjecture by Lang, inspired by work by Dem`janenko (\cite{lang:ec_da}*{page 140}), that the number of integral points on an elliptic curve in \emph{minimal} Weierstrass form is bounded above by a constant only depending on the field and the rank of the curve.

Both conjectures are ostensibly false if the \emph{minimal} condition is dropped. Silverman proposes the following definition for minimality of rational functions. Let $f,g\in\ZZ[z]$ be polynomials such that $\phi(z)=\frac{f(z)}{g(z)}$ and such that the coefficients of $f,g$ do not have a divisor in common. If $\deg(f)=\deg(g)$, see Section~\ref{S:preliminaries} for the full definition, we define
\[\Res(\phi)=|\res(f,g)|.\]

We have that the group of fractional linear transformations $\PGL_2(\QQ)=\{z\mapsto\frac{az+b}{cz+d}: a,b,c,d\in\QQ \text{ and } ad-bc\neq 0\}$ acts by conjugation on $\QQ(z)$, i.e., if $A\in\PGL_2(\QQ)$ then $\phi^A=A^{-1}\circ\phi\circ A$. Silverman considers the subgroup $\Aff_2(\QQ)=\{z\mapsto az+b: a,b\in\QQ\text{ and } a\neq 0\}$ and defines a rational function to be \emph{affine minimal} if
\[\Res(\phi)=\min\{\Res(\phi^A): A\in \Aff_2(\QQ)\}\]
and phrases Conjecture~\ref{C:uniform_bound} in terms of it.
Because $\ZZ$ is a Dedekind domain, this yields the same notion as full $\PGL_2(\QQ)$-minimality (see Proposition~\ref{P:GL_and_affine_minimal}).

In order to enable the gathering of experimental evidence for the conjecture, one obviously needs a procedure to decide if a given rational function $\phi(z)$ is (affine) minimal, analogous to Tate's Algorithm \cite{tate:algorithm} to compute minimal models of elliptic curves. The main contribution of this article is Algorithm~\ref{A:globalminimalmodel}, an explicit, practical procedure that, given a rational function $\phi$, tests whether it is minimal and if not, computes a fractional linear transformation $A$ such that $\phi^A$ is minimal. The procedure we describe applies to rational functions $\phi$ over any field $K$ that is the field of fractions of a principal ideal domain $R$. We also provide an implementation of the algorithm for rational functions over $\QQ$ in the computer algebra system Magma \cite{magma}, see \cite{brumol:electronic}.

We apply the algorithm as part of a search for minimal rational functions over $\QQ$ of degrees $2$ and $3$ with many integers in their orbits. We do this by prescribing an initial orbit consisting of small integers and interpolating the rational function $\phi$ through the prescribed values. We can then test if there are any more integers in the early part of the orbit and test if $\phi$ is minimal. A systematic search of possible initial orbits yielded among other results,
\[\dfrac{86z^2 - 1068z - 338}{z^2 + 7z - 338}\text{ with }\calO_\phi(0)=[0,\, 1,\, 4,\, 11,\, 12,\, 7,\,15,\,-374,\, \ldots]\]
and
\[\dfrac{7z^3 - 41z^2 - 216z + 180}{2z^3 - z^2 - 21z + 90}\text{ with }\calO_\phi(0)=[0,\,2,\,-6,\,6,\,-3,\,3,\,-9,\,5,\,-5,\,8,\ldots].\]
These are orbits with at least $8$ resp.\ $10$ integers in them, which is $2$ more than what one can prescribe using interpolation in either case. In particular we see that for Conjecture~\ref{C:uniform_bound} we would need at least $C_2\geq 8$ and $C_3\geq 10$. See Section~\ref{S:intpoints} and \cite{brumol:electronic} for the complete results of our search.

As an easy corollary of the construction of our algorithm, we see that if $f,g\in\ZZ[z]$ are monic polynomials with no roots in common and $2\deg(g)<\deg(f)+1$ then $\phi(z)=f(z)/g(z)$ is minimal (see Remark~\ref{R:trivial_minimal}). As a consequence, from
\[\phi(z)=\frac{z^d+p^r}{z},\]
we see that powers of primes occurring in resultants of minimal rational maps can be arbitrarily large. That means that a possible dynamical analogue of Szpiro's conjecture would require a more refined concept of conductor and/or of resultant than the most naive guesses, see Section~\ref{S:szpiro}.

Finally, we note that the set of minimal rational maps is the union of $\PGL_2(\ZZ)$-orbits. We show that at least for functions of odd degree, the set may consist of more than a single orbit (see Example~\ref{E:mult_orbit}). We make some remarks about the general structure in Section~\ref{S:mintran}. These remarks help us in providing Example~\ref{E:no_affine_minimal_model} of a rational function over $\QQ(\sqrt{-5})$ that does admit a minimal model but not via an affine transformation, thus providing an example that our algorithm is fundamentally restricted to principal ideal domains.

A significant part of the results in this paper come from the M.Sc.~thesis of the second author \cite{molnar:msc}.

\section{Preliminaries}\label{S:preliminaries}

Let $K$ be a field. Our main objects of study are rational morphisms
\[\phi\colon\PP^1\to\PP^1\]
of degree $d\geq 2$, defined over $K$. We follow the definitions and notation from \cite{ADS} and write  $\Rat_d(K)$ for the space of such rational morphisms. By choosing homogeneous coordinates $(X:Y)$ on $\PP^1$ we can represent a morphism $\phi$ by two homogeneous degree $d$ polynomials $F,G\in K[X,Y]$ such that
\[\phi(X:Y)=(F(X,Y):G(X,Y)).\]
We write $F(X,Y)=f_dX^d+f_{d-1}X^{d-1}Y+\cdots+f_0Y^d$ and $G(X,Y)=g_dX^d+g_{d-1}X^{d-1}Y+\cdots+g_0Y^d$.
It is often convenient to work with an affine coordinate $z=X/Y$ instead and write $f(z)=F(z,1)$ and $g(z)=G(z,1)$, so that we have
\[\phi(z)=\frac{f(z)}{g(z)}.\]

Rational morphisms $\phi$ defined over $K$ correspond to rational points on a quasi-projective variety $\Rat_d$ in the sense that the projective point $(f_d:\cdots:f_0:g_d:\cdots:g_0)\in\PP^{2d+2}(K)$ completely determines $\phi$. Let $\Res_d$ be the resultant of $F,G$ as degree $d$ forms. This is a bihomogeneous polynomial of bidegree $(d,d)$ in $f_0,\ldots,f_d$ and $g_0,\ldots,g_d$. In order for $\phi$ to be of degree $d$ we need that $\Res_d(F,G)$ does not vanish. Therefore, the variety $\Rat_d$ is the complement in $\PP^{2d+2}$ of the hypersurface $\Res_d=0$.

The automorphism group of $\PP^1$ is $\PGL_2$. It has a natural right-action on $\Rat_d$ via conjugation: for any $A\in\PGL_2$ we have $\phi^A=A^{-1}\circ\phi\circ A$. Rational maps in the same $\PGL_2$-orbit obviously have the same dynamical properties, so the appropriate moduli space for dynamical purposes is
\[\calM_d=\Rat_d/\PGL_2.\]

\begin{remark} See \cite{ADS}*{Section 4.4} for a discussion on its structure as an algebraic variety.
In general, there may be rational points on $\calM_d$ that do not have a rational point on $\Rat_d$ above them. These are rational morphisms for which the field of moduli is not equal to the field of definition. See \cite{ADS}*{Section 4.10} and \cite{sil:FOMFOD}.
\end{remark}

For our purposes it is more convenient to make a step in the other direction and consider the affine cone over $\Rat_d$. Given a rational morphism $\phi=(F:G)$, we say $[F,G]$ is a \emph{model} for $\phi$. Similarly, in affine coordinates, we have $\phi=f/g$ and we also write $[f,g]$ for the model of $\phi$, which encodes exactly the same information.

We also say it is a model for $[\phi]$, where $[\phi]$ is the class of $\phi$ in $\calM_d$. Naturally, if $[F,G]$ is a model for $\phi$ and $\lambda$ is a non-zero scalar, then $[\lambda F,\lambda G]$ is also a model for $\phi$.
We write $M_d$ for the space of models. The embedding
\[\begin{array}{ccc}
M_d&\to&\AA^{2d+2}\\
{}[F,G]&\mapsto&(f_d,\ldots,f_0,g_d,\ldots,g_0)
\end{array}\]
identifies $M_d$ with the affine open $\{\Res_d\neq 0\}\subset \AA^{2d+2}$.
We follow \cite{ADS}*{4.11} and lift the action of $\PGL_2$ on $\Rat_d$ to an action of $\GL_2$ on $M_d$ in a way that avoids division. For $A=\mat{\alpha&\beta\\\gamma&\delta}$ we consider the \emph{classical adjoint}
\[A^\adj=\det(A)\,A^{-1}=\mat{\delta&-\beta\\-\gamma&\alpha}.\]
Note that $[F,G]\in M_d$ and $A\in\GL_2$ can be interpreted as morphisms $\AA^2\to \AA^2$, so we can let $A$ act on $M_d$ by defining
\[[F,G]^A=[F_A,G_A]= A^{\adj}\circ[F,G]\circ A,\]
where
\begin{align*}
F_A(X,Y)&=\delta F(\alpha X+\beta Y, \gamma X + \delta Y)- \beta G(\alpha X+\beta Y, \gamma X + \delta Y)\\
G_A(X,Y)&=-\gamma F(\alpha X+\beta Y, \gamma X + \delta Y)+ \alpha G(\alpha X+\beta Y, \gamma X + \delta Y)
\end{align*}
It is easy to check that this action descends to the action of $\PGL_2$ on $\Rat_d$ we considered earlier.
We now have an action of $\Gm\times\GL_2$ on $M_d$ given by
\[[F,G]^{(\lambda,A)}=[\lambda F_A,\lambda G_A] \text{ where }(\lambda,A)\in \Gm\times\GL_2.\]
Furthermore, the compatibility with $\Rat_d$ gives us that $M_d/(\Gm\times\GL_2)=\calM_d$.

The main advantage of considering $M_d$ rather than $\Rat_d$ is that $\Res_d$ can be interpreted as a function on $M_d$. It is a covariant of the group we are considering.
\begin{proposition}\label{P:restrans}
Let $[F,G]\in M_d$ and let $(\lambda,A)\in\Gm\times \GL_2$. Then
\[\Res_d(\lambda F_A,\lambda G_A)=\lambda^{2d} \det(A)^{d^2+d}\Res(F,G).\]
\end{proposition}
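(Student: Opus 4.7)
The plan is to decompose the action of $(\lambda,A)$ into three elementary steps and track how the resultant transforms under each. Write $F'(X,Y)=F(\alpha X+\beta Y,\gamma X+\delta Y)$ and $G'(X,Y)=G(\alpha X+\beta Y,\gamma X+\delta Y)$, so that by definition
\[
[F_A,G_A] = [\delta F'-\beta G',\,-\gamma F'+\alpha G'],
\]
which expresses $(F_A,G_A)$ as a scalar rescaling applied to a linear combination of $(F',G')$, which in turn is obtained from $(F,G)$ by the linear change of variables $A\in\GL_2$. The identity will then follow by multiplying the scalar factors picked up at each stage.

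First I would invoke the bihomogeneity of $\Res_d$ of bidegree $(d,d)$ in the coefficients of the two degree-$d$ forms, which gives
\[
\Res_d(\lambda F_A,\lambda G_A)=\lambda^{2d}\Res_d(F_A,G_A).
\]
Second, I would apply the classical formula for the resultant under a linear combination of the two forms: for any $2\times 2$ matrix $M$ with entries acting on $(F',G')$, one has $\Res_d(MF',MG')=\det(M)^{d}\Res_d(F',G')$. Here the combining matrix is $A^{\adj}=\mat{\delta&-\beta\\-\gamma&\alpha}$, whose determinant equals $\det(A)$, so
\[
\Res_d(F_A,G_A)=\det(A)^{d}\Res_d(F',G').
\]
Third, I would invoke the transformation formula for the resultant of two binary forms under a linear substitution of the variables: two forms of degrees $m,n$ pulled back by $A$ have their resultant multiplied by $\det(A)^{mn}$; with $m=n=d$ this yields
\[
\Res_d(F',G')=\det(A)^{d^{2}}\Res_d(F,G).
\]
Multiplying the three factors gives $\lambda^{2d}\det(A)^{d}\det(A)^{d^{2}}=\lambda^{2d}\det(A)^{d^{2}+d}$, as desired.

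The only non-obstacle that deserves care is the linear-substitution formula in the third step; it is standard (for example via the product-of-differences description of the resultant, where substituting $A$ multiplies each of the $d^{2}$ differences $\alpha_i-\beta_j$ by $\det(A)$ after accounting for the scaling of leading coefficients). The rest is bookkeeping: the scalar $\lambda$ enters only through bihomogeneity, and the matrix $A$ enters twice, once through the adjoint acting on the vector $(F',G')$ and once through the pullback on variables. I expect no genuine difficulty beyond ensuring the two appearances of $\det(A)$ are not double-counted.
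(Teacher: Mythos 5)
Your proof is correct and follows the standard route: the paper simply defers to Silverman's \emph{Arithmetic of Dynamical Systems}, Proposition~4.95, and the argument there proceeds by the same three-step decomposition (bihomogeneity in the coefficients giving $\lambda^{2d}$, the linear combination by $A^{\adj}$ giving $\det(A^{\adj})^d=\det(A)^d$, and the pullback of variables giving $\det(A)^{d^2}$). The sketch at the end of your third step is also sound once one does the bookkeeping carefully: writing both forms in factored form, each of the $d^2$ root differences picks up a factor $\det(A)$ together with a rational factor that is exactly absorbed by the change in leading coefficients, so everything cancels to leave $\det(A)^{d^2}$ as claimed.
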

\begin{proof}
See the proof of \cite{ADS}*{Proposition 4.95}.
\end{proof}

\begin{remark} Note that $\Res_d(F,G)$ is not equal to the univariate polynomial resultant $\res(f,g)$ if either $d_f=\deg_z(f)$ or $d_g=\deg_z(g)$ is smaller than $d$. We have the relation
\[\Res_d(F,G)=f_d^{d-d_g}((-1)^d g_d)^{d-d_f}\res(f,g).\]
\end{remark}

Now consider a field $K$ that is the field of fractions of an integral domain $R$. Let $[F,G]\in M_d(K)$ be a model of a rational morphism $\phi\in \Rat_d(K)$, and hence also a model of the isomorphism class $[\phi]\in \calM_d(K)$. We say that $[F,G]$ is a model \emph{over} $R$ if $F,G\in R[X,Y]$. By clearing denominators, one can always obtain a model over $R$ from a model over $K$. Note that if $[F,G]$ is a model over $R$ then $[F,G]\in\AA^{2d+2}(R)$, but that $[F,G]$ is an $R$-integral point on $M_d$ only if $\Res_d(F,G)$ is a unit in $R$.

\subsection{Minimal models}

\begin{definition}
Let $R$ be an integral domain with field of fractions $K$. Let $\phi\in\Rat_d(K)$.
We define the \emph{resultant} of $\phi$ to be the $R$-ideal generated by the resultants of the models of $\phi$ over $R$, i.e.,
\[\Res_R(\phi)=\big(\Res_d(F,G) : [F,G]\in M_d(K) \text{ and a model of $\phi$ over } R\big)R\]
Similarly, we define the \emph{resultant} of $[\phi]\in\calM_d(K)$ to be the $R$-ideal generated by the resultants of its models over $R$, i.e.,
\[\Res_R([\phi])=\big(\Res_d(F,G): [F,G]\in M_d(K)\text{ and a model of $[\phi]$ over } R\big)R\]
\end{definition}

\begin{remark} We do not concern ourselves with the resultants of classes in $\calM_d(K)$ that do not admit models over $K$.
\end{remark}

\begin{definition}
We say that $[F,G]\in M_d(K)\cap \AA^{2d+2}(R)$ is an $R$-\emph{minimal} model if $[F,G]$ is a model of $[\phi]$ with a resultant that generates the ideal $\Res_R([\phi])$, i.e.,
\[\Res_R([\phi])=\Res_d(F,G)R.\]
\end{definition}

\begin{definition} We write $\Aff_2\subset\GL_2$ for the algebraic subgroup of matrices that induce automorphisms of $\PP^1$ that leave $\infty=(1:0)$ invariant, i.e.,
\[\Aff_2(R)=\left\{\mat{\alpha&\beta\\0&\delta}\in\GL_2(R)\right\}\]
\end{definition}

The name is motivated by the fact that a matrix in $\Aff_2$ induces an affine transformation $z\mapsto \frac{1}{\delta}(\alpha z+\beta)$. We define $\calM_{d,1}=M_d/(\GG_m\times \Aff_2)$. For a rational map $\phi\in\Rat_d(K)$ we write $[\phi]_1\in \calM_{d,1}(K)$. We say that $[F,G]\in M_d(K)$ is a \emph{model} for $[\phi]_1$ if $[F/G]_1=[\phi]_1$ (i.e., if there is an affine transformation that conjugates one into the other). We define
\[\Res_R([\phi]_1)=\big(\Res_d(F,G): [F,G]\in M_d(K)\text{ and a model of $[\phi]_1$ over } R\big)R.\] 

\begin{definition}
We say that $[F,G]\in M_d(K)\cap \AA^{2d+2}(R)$ is an $R$-\emph{affine minimal} model if $[F,G]$ is a model of $[\phi]_1$ with a resultant that generates $\Res_R([\phi]_1)$, i.e.,
\[\Res_R([\phi]_1)=\Res_d(F,G)R.\]
\end{definition}

\begin{proposition}\label{P:aff_decomp}
Let $R$ be a principal ideal domain with field of fractions $K$. Then
\[\GL_2(K)=\Aff_2(K)\SL_2(R)\text{ and }\GL_2(K)=\SL_2(R)\Aff_2(K).\]
\end{proposition}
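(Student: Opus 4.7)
The plan is to produce the first decomposition $\GL_2(K)=\Aff_2(K)\SL_2(R)$ by a direct row-reduction argument, and then obtain the second for free by inversion.

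Take $M=\mat{a&b\\c&d}\in\GL_2(K)$ and look at its bottom row. If $c=0$, then $M\in\Aff_2(K)$ already and we are done by taking $S=I$. Otherwise, I first clear denominators: choose a common denominator $n\in R\setminus\{0\}$ so that $nc,nd\in R$, and then let $g\in R$ be a generator of the ideal $(nc,nd)R$ (this exists because $R$ is a PID). Writing $nc=gc_1$ and $nd=gd_1$ with $c_1,d_1\in R$, the pair $(c_1,d_1)$ has unit content, so by B\'ezout there exist $a_1,b_1\in R$ with $a_1d_1-b_1c_1=1$.

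Set $S_0=\mat{a_1&b_1\\c_1&d_1}\in\SL_2(R)$, so $S_0^{-1}=\mat{d_1&-b_1\\-c_1&a_1}$. The bottom row of $MS_0^{-1}$ is $(cd_1-dc_1,\,-cb_1+da_1)$, and the first entry is
\[cd_1-dc_1=\tfrac{g}{n}(c_1d_1-d_1c_1)=0,\]
so $MS_0^{-1}$ is upper triangular, i.e., lies in $\Aff_2(K)$. This gives $M\in\Aff_2(K)\SL_2(R)$ and proves the first equality.

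For the second equality, note that $\Aff_2(K)$ and $\SL_2(R)$ are subgroups, hence closed under inversion. Thus
\[\GL_2(K)=\GL_2(K)^{-1}=\bigl(\Aff_2(K)\SL_2(R)\bigr)^{-1}=\SL_2(R)\Aff_2(K).\]
The only non-trivial ingredient is the existence of $g$ and of the B\'ezout coefficients $a_1,b_1$, which is precisely where the PID hypothesis enters; everything else is a one-line verification.
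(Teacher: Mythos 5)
Your proof is correct and follows essentially the same approach as the paper: both arguments pick an $\SL_2(R)$ matrix that kills the bottom-left entry of $M$ on one side, using the PID hypothesis to express the ratio of the bottom-row entries as a ratio of coprime elements of $R$ and then completing to a determinant-one matrix via B\'ezout. The only cosmetic difference is that you derive the second factorization by inverting the first (a clean trick), whereas the paper just cites ``an analogous argument.''
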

\begin{proof}
Let $B=\mat{\alpha&\beta\\\gamma&\delta}\in\GL_2(K)$. In order to establish the first claim we exhibit a matrix $C\in\SL_2(R)$ such that $BC\in\Aff_2(K)$. If $\gamma=0$ we can take $C$ to be the identity matrix. Otherwise, there are coprime $a,c\in R$ such that $\frac{\delta}{\gamma}=-\frac{a}{c}$.
It follows that $a\gamma+c\delta=0$ and that there are $b,d\in R$ such that $ad-bc=1$. We can take $C=\mat{a&b\\c&d}\in\SL_2(R)$. The second claim follows by an analogous argument.
\end{proof}

\begin{proposition}\label{P:GL_and_affine_minimal}
Let $R$ be a Dedekind domain with field of fractions $K$. Then for any $\phi\in \Rat_d(K)$ we have $\Res_R([\phi])=\Res_R([\phi]_1)$. In particular, a model $[F,G]$ for $\phi$ is $R$-affine minimal if and only if it is $R$-minimal.
\end{proposition}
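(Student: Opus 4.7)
The inclusion $\Res_R([\phi]_1)\subseteq \Res_R([\phi])$ is immediate, since every $\Aff_2(K)$-conjugate of $\phi$ is in particular a $\PGL_2(K)$-conjugate, so any model of $[\phi]_1$ is already a model of $[\phi]$. My plan is to establish the reverse inclusion first when $R$ is a PID, using the decomposition in Proposition~\ref{P:aff_decomp}, and then to reduce the general Dedekind case to the PID case by localizing at each prime.

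Suppose $R$ is a PID and let $[F,G]\in M_d(R)$ be a model of $[\phi]$. Fixing any model $[F_0,G_0]$ of $\phi$, I would write $[F,G]=[F_0,G_0]^{(\lambda,A)}$ for some $(\lambda,A)\in K^*\times\GL_2(K)$ and then apply Proposition~\ref{P:aff_decomp} to factor $A=A_1C$ with $A_1\in\Aff_2(K)$ and $C\in\SL_2(R)$. Functoriality of the right action yields $[F,G]=[F',G']^{(1,C)}$, where $[F',G']:=[F_0,G_0]^{(\lambda,A_1)}$ is tautologically a model of $[\phi]_1$. Since $[F',G']=[F,G]^{(1,C^{-1})}$ and $C^{-1}\in\SL_2(R)$, the explicit action formula from Section~\ref{S:preliminaries} shows that $[F',G']$ still has $R$-integral coefficients, while Proposition~\ref{P:restrans} together with $\det(C)=1$ forces $\Res_d(F,G)=\Res_d(F',G')$. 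Thus every generator of $\Res_R([\phi])$ lies in $\Res_R([\phi]_1)$.

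For a general Dedekind domain $R$, I would invoke the local-global principle: an ideal of $R$ is determined by its localizations at the nonzero primes $\fp$, and each $R_\fp$ is a DVR, hence a PID, so the PID case already gives $\Res_{R_\fp}([\phi])=\Res_{R_\fp}([\phi]_1)$. It then remains to verify that forming the resultant ideal commutes with localization, i.e.\ that $\Res_R([\phi])R_\fp=\Res_{R_\fp}([\phi])$ and similarly for $[\phi]_1$. The nontrivial direction is to realise any model over $R_\fp$ as the scalar multiple by some $s\in R\setminus\fp$ of a model over $R$; by Proposition~\ref{P:restrans} this rescaling changes the resultant by $s^{2d}$, which is a unit in $R_\fp$, so the scaled generator lies in $\Res_R([\phi])R_\fp$. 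This localization compatibility is the step I expect to require the most care, but once it is in place the Dedekind case reduces cleanly to the PID case already proved.
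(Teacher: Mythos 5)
Your argument is correct and follows the same route as the paper: the PID case is handled via Proposition~\ref{P:aff_decomp} together with $\SL_2$-invariance of $\Res_d$ from Proposition~\ref{P:restrans}, and the Dedekind case is reduced to the PID case by localizing at each prime and checking that the resultant ideal commutes with localization. You fill in details that the paper leaves as "straightforward," but the underlying ideas are identical.
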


\begin{proof}
First note that Proposition~\ref{P:restrans} establishes that $\Res_d$ is invariant under $\SL_2$, so Proposition~\ref{P:aff_decomp} immediately gives the result for principal ideal domains $R$.

If $R$ is a Dedekind domain, it is straightforward to check that a model is $R$-(affine) minimal if and only if it is $R_\fp$-(affine) minimal for all localizations $R_\fp$ at primes $\fp$. Furthermore, for Dedekind domains, the localizations $R_\fp$ are principal ideal domains, so locally, minimality and affine minimality coincide. More explicitly, one checks that
$\Res_{R_\fp}([\phi])=\Res_R([\phi])R_\fp$ and that $\Res_{R_\fp}([\phi]_1)=\Res_R([\phi]_1)R_\fp$ and that
$I,J\subset R$ are equal if and only if for all primes $\fp$ we have $IR_\fp=JR_\fp$.
\end{proof}

\begin{remark} Silverman \cite{ADS}*{Proposition~4.100} shows that if $R$ is a Dedekind domain with a non-trivial class group, then not every class $[\phi]$ admits an $R$-minimal model. As we will see in Corollary~\ref{C:minimal_model}, if $R$ is a principal ideal domain, then any class admits an $R$-minimal model. In fact, Proposition~\ref{P:aff_decomp} implies that such a model can be obtained from any given model via an affine transformation.

Note that Proposition~\ref{P:GL_and_affine_minimal} does \emph{not} imply this in general: if $R$ has a non-trivial ideal class group, then it is possible to have a rational function $\phi$ such that $[\phi]$ admits an $R$-minimal model, but $[\phi]_1$ does not admit an $R$-affine minimal model. See Example~\ref{E:no_affine_minimal_model}.
\end{remark}

\subsection{Minimal models over discrete valuation rings}

We now restrict to the case where $R$ is a discrete valuation ring, with maximal ideal $\fp$, field of fractions $K$ and valuation $v\colon K\to\ZZ\cup\{\infty\}$. We write
\[v\mat{\alpha&\beta\\ \gamma&\delta}=\min( v(\alpha),\ldots,v(\delta) )\]
as well as
\[v(\sum_{i=0}^d f_i z^i)=\min(f_0,\ldots,f_d) \text{ and } v([F,G])=\min(v(F),v(G)).\]
With these definitions it is easy to check that for $[F,G]\in M_d(K)$ and $(\lambda,A)\in (\GG_m\times\GL_2)(K)$, there is a bound $B$ such that for any $(\lambda',A')$ such that $v(\lambda-\lambda')>B$ and $v(A-A')>B$, we have $v(\Res_d(\lambda F_A,\lambda G_A))=v(\Res_d(\lambda' F_{A'},\lambda' G_{A'}))$.

\begin{proposition}\label{P:localminimal}
Let $R$ be a discrete valuation ring with field of fractions $K$ and uniformizer $\pi$.
Let $\phi\in\Rat_d(K)$ be a rational function given by a model $[F,G]\in M_d(K)$. Then there are $e_1,e_2,e_3\in\ZZ$ and $\beta\in K$ such that for any $\beta'\in\beta+\pi^{e_3}R$ we can set
\[(\lambda,A)=(\pi^{e_1},\mat{\pi^{e_2}&\beta'\\0&1})\in (\GG_m\times\GL_2)(K)\]
and have that $[\lambda F_A,\lambda G_A]$ is an $R$-minimal model for $\phi$.
\end{proposition}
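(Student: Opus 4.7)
The plan is to reduce to the stated normal form via a sequence of changes of variables. First, $\Res_R([\phi])$ is a nonzero ideal in $R$ (clearing denominators in any model of $\phi$ gives an integral model with nonzero resultant), so it equals $\pi^n R$ for some $n \in \ZZ_{\geq 0}$, and this minimum is attained by some integral model $[F^*, G^*] = [\lambda_0 F_{A_0}, \lambda_0 G_{A_0}]$ for some $(\lambda_0, A_0) \in \Gm(K) \times \GL_2(K)$. Decomposing $A_0 = A_0' S$ via Proposition~\ref{P:aff_decomp} with $A_0' \in \Aff_2(K)$ and $S \in \SL_2(R)$, and noting that the $\SL_2(R)$-action preserves $R$-integrality (matrix entries in $R$) and the resultant valuation (since $\det(S)=1$ in Proposition~\ref{P:restrans}), we may assume $A_0 \in \Aff_2(K)$, say $A_0 = \mat{\alpha_0 & \beta_0 \\ 0 & \delta_0}$.

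I would then normalize $\delta_0$, $\alpha_0$, and $\lambda_0$ successively. A direct computation from the explicit formulas for $F_A, G_A$ gives $[F_{cA}, G_{cA}] = c^{d+1}[F_A, G_A]$, so scaling $A_0$ by $c \in K^\times$ can be reabsorbed into $\lambda_0$; taking $c = \delta_0^{-1}$ and relabeling, we may assume $A_0 = \mat{\alpha_0 & \beta_0 \\ 0 & 1}$. To make $\alpha_0$ a power of $\pi$, write $\alpha_0 = u\pi^{e_2}$ with $u \in R^\times$ and $e_2 = v(\alpha_0)$; for any $\gamma \in R$ the matrix $T = \mat{1/u & \gamma \\ 0 & 1}$ lies in $\GL_2(R)$, and $A_0 T = \mat{\pi^{e_2} & \alpha_0\gamma + \beta_0 \\ 0 & 1}$. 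Since $T$ has entries in $R$, the action preserves $R$-integrality, and $\det(T) = u^{-1} \in R^\times$ implies the resultant scales by a unit (by Proposition~\ref{P:restrans}), so the new model remains $R$-minimal. In particular, every $\beta_1 \in \beta_0 + \pi^{e_2} R$ yields an $R$-minimal model with matrix $\mat{\pi^{e_2} & \beta_1 \\ 0 & 1}$. Finally, writing $\lambda_0 = w\pi^{e_1}$ with $w \in R^\times$ and replacing $\lambda_0$ by $\pi^{e_1}$ merely rescales the whole model by $w^{-1}$, preserving integrality and the resultant valuation.

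Setting $e_3 = e_2$ and fixing $\beta = \beta_1$ from the previous step, any $\beta' \in \beta + \pi^{e_3}R$ still lies in $\beta_0 + \pi^{e_2} R$, so the argument of the previous paragraph applied with $\gamma = (\beta' - \beta_0)/\alpha_0 \in R$ produces an $R$-minimal model of the form $(\pi^{e_1}, \mat{\pi^{e_2} & \beta' \\ 0 & 1})$, as required.

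The main obstacle I anticipate is the normalization of $\alpha_0$ to a pure power of $\pi$: this step cannot be accomplished by the $\SL_2(R)$-action alone (which was used earlier to reduce to affine matrices), but requires the full $\GL_2(R)$-action via a matrix of determinant $u^{-1} \in R^\times$, together with a compensating shift of $\beta_0$. The preservation of the minimal resultant ideal then hinges on $\det(T)^{d^2+d}$ being a unit in $R$.
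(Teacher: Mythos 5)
Your proposal is correct and follows essentially the same route as the paper: obtain an affine minimal transform via the ideal structure of the DVR and Proposition~\ref{P:aff_decomp}, then normalize $\delta$, $\alpha$ and $\lambda$ by absorbing units into $(\GG_m\times\GL_2)(R)$. The one place you diverge is the final $\beta'$-perturbation step: the paper argues softly that the resultant is a polynomial in $\beta$, so its valuation is locally constant away from its zeros (giving \emph{some} $e_3$), whereas you produce the stability explicitly by right-multiplying by $\mat{1&\gamma\\0&1}\in\GL_2(R)$, which yields the concrete value $e_3=e_2$ and bypasses the local-constancy argument; this is a modestly sharper and more algebraic treatment of the same step, and it also makes manifest that $R$-integrality of the model (not just the resultant valuation) is preserved under the perturbation.
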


\begin{proof}
Since $R$ is a discrete valuation ring, we know that $\inf\{v(a):a\in \Res_R([\phi])\}$ is attained in the ideal and the triangle inequality shows it must be attained by the resultant of a model over $R$. This shows that there is a minimal model. In fact, we can use the same reasoning to assert the existence of an affine minimal model for $[\phi]_1$ and Proposition~\ref{P:GL_and_affine_minimal} guarantees that this model is also minimal. This shows that we can attain a minimal model by a transformation $(\lambda,A)\in (\GG_m\times \Aff_2)(K)$. It remains to prove that we can restrict to a transformation of the shape described.

First note that $(\lambda \delta^{d+1},\mat{\frac{\alpha}{\delta}&\frac{\beta}{\delta}\\0&1})$ and $(\lambda,\mat{\alpha&\beta\\0&\delta})$ have the same effect, so we can assume that $\delta=1$. Next note that transforming by $(\GG_m\times\GL_2)(R)$ does not change minimality, so we can assume that $\lambda$ and $\alpha$ are powers of a given uniformizer.

It remains to show that $v(\Res_d(\lambda F_A,\lambda G_A))$ remains constant under small perturbations of $\beta$. Since the resultant is polynomial in $\beta$, its valuation is locally constant away from zero and the desired result follows.
\end{proof}

\begin{corollary}\label{C:minimal_model}
Let $R$ be a principal ideal domain with field of fractions $K$. Then for any $\phi\in\Rat_d(K)$, the class $[\phi]\in\calM_d(K)$  has an $R$-minimal model $[F,G]$.
\end{corollary}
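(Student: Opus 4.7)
The plan is to reduce the global problem to finitely many local ones and patch them together using the principal-ideal-domain structure of $R$. Take any model $[F_0,G_0]\in M_d(K)\cap\AA^{2d+2}(R)$ of $\phi$ (obtained from an arbitrary $K$-model by clearing denominators). Its resultant $r_0=\Res_d(F_0,G_0)$ is a nonzero element of $R$, so only finitely many primes $\fp_1,\ldots,\fp_n$ divide it; let $\pi_i\in R$ be a generator of $\fp_i$. At every other prime $\fp$ we have $v_\fp(r_0)=0$, the minimum possible valuation for the resultant of any $R_\fp$-model, so $[F_0,G_0]$ is already $R_\fp$-minimal at such $\fp$.

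For each bad prime $\fp_i$, applying Proposition~\ref{P:localminimal} over $R_{\fp_i}$ produces integers $e_{1,i},e_{2,i},e_{3,i}$ and an element $\beta_i\in K$ such that any transformation $(\pi_i^{e_{1,i}},\mat{\pi_i^{e_{2,i}}&\beta'\\0&1})$ with $\beta'\in\beta_i+\pi_i^{e_{3,i}}R_{\fp_i}$ produces an $R_{\fp_i}$-minimal model. The crucial step is to realize all these local prescriptions simultaneously by a single $(\lambda,A)\in(\GG_m\times\Aff_2)(K)$. Set
\[\lambda=\prod_{i=1}^n\pi_i^{e_{1,i}},\qquad \alpha=\prod_{i=1}^n\pi_i^{e_{2,i}},\qquad A=\mat{\alpha&\beta\\0&1},\]
and choose $\beta\in K$, via CRT in the PID $R$, so that $v_{\fp_i}(\beta-\beta_i)\geq e_{3,i}$ for every $i$ and $v_\fp(\beta)\geq 0$ for every $\fp\notin\{\fp_1,\ldots,\fp_n\}$; concretely, one truncates each $\beta_i$ in its $\fp_i$-adic expansion at precision $e_{3,i}$, clears denominators by a product of powers of the $\pi_j$, and applies CRT to the resulting residues in $R$.

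It remains to check that $[\lambda F_A,\lambda G_A]$ is $R_\fp$-minimal at every prime; together with $R=\bigcap_\fp R_\fp$, this yields both integrality of the coefficients and, by the local-to-global argument in the proof of Proposition~\ref{P:GL_and_affine_minimal}, $R$-minimality. At a good prime $\fp$ all the $\pi_j$ are units and $\beta\in R_\fp$, so $(\lambda,A)\in(\GG_m\times\GL_2)(R_\fp)$ and preserves the $R_\fp$-minimality already present. At a bad prime $\fp_i$, the $\pi_j$ with $j\neq i$ are units in $R_{\fp_i}$, so one factors $(\lambda,A)=(\pi_i^{e_{1,i}},\mat{\pi_i^{e_{2,i}}&\beta\\0&1})\cdot(u_1,\mat{u_2&0\\0&1})$ with $u_1,u_2\in R_{\fp_i}^*$: the first factor yields an $R_{\fp_i}$-minimal model by Proposition~\ref{P:localminimal} (since $\beta\in\beta_i+\pi_i^{e_{3,i}}R_{\fp_i}$), and the second, lying in $(\GG_m\times\GL_2)(R_{\fp_i})$, preserves minimality.

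The main obstacle is the global choice of $\beta$: we must produce a single element of $K$ that matches each $\beta_i$ to the required $\fp_i$-adic precision while remaining integral at every other prime. This is exactly where the PID hypothesis (equivalently, trivial class group) is used; the patching can fail over a general Dedekind domain, in agreement with the counterexample forthcoming in Example~\ref{E:no_affine_minimal_model}.
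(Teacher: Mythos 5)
Your proposal is correct and tracks the paper's own proof very closely: both start from an arbitrary integral model $[F_0,G_0]$, observe that it is already $R_\fp$-minimal at the finitely many primes not dividing $\Res_d(F_0,G_0)$, and at each bad prime $\fp_i=\pi_iR$ invoke Proposition~\ref{P:localminimal} to get a local fix of the prescribed triangular shape. The only difference is packaging. The paper proceeds \emph{iteratively}: it applies one local transformation at a time to the current model, noting that since $\pi_i$ is a unit at every $\fq\neq\fp_i$ and $\beta'$ can be taken in $\pi_i^{e_4}R$, each fix lies in $(\GG_m\times\GL_2)(R_\fq)$ for all $\fq\neq\fp_i$ and hence does not disturb minimality elsewhere. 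You instead assemble all $n$ local fixes into a \emph{single} $(\lambda,A)\in(\GG_m\times\Aff_2)(K)$, multiplying the diagonal/scalar parts $\prod\pi_i^{e_{1,i}}$ and $\prod\pi_i^{e_{2,i}}$ and gluing the off-diagonal entries $\beta_i$ by a CRT argument, then verify minimality at each $\fp_i$ by peeling off a unit factor. Both arguments use the PID hypothesis in exactly the same spot (principality of the $\fp_i$), and both are sound; the paper's iteration simply sidesteps the explicit CRT step, while your version produces the global transformation in one stroke. Your closing remark about the CRT step failing over a non-principal Dedekind domain is the right diagnosis and agrees with Example~\ref{E:no_affine_minimal_model}.
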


\begin{proof}
First let $[F,G]$ be any model of $\phi$ over $R$. Since $R$ is a Dedekind domain we have the factorisation $\Res_d(F,G)R=\fp_1^{e_1}\cdots \fp_n^{e_n}$ into prime ideals. It follows that $[F,R]$ is $R_\fq$-minimal for all primes $\fq\notin\{\fp_1,\ldots,\fp_n\}$.

We modify $[F,G]$ iteratively to ensure minimality for each index $i$ in the following way.
The assumption that $R$ is a principal ideal domain ensures that $\fp_i=\pi_iR$ for some $\pi_i\in R$.
We apply Proposition~\ref{P:localminimal} to find a transformation $(\lambda,A)$ such that $[F,G]^{(\lambda,A)}$ is $R_{\fp_i}$-minimal.
Since $R$ is dense in the localization $R_{\fp_i}$, we can choose $\beta'\in \pi_i^{e_4}R$ for some $e_4\in\ZZ$. This means that $(\lambda,A)\in(\GG_m\times\GL_2)(R_\fq)$ for any prime $\fq\neq\fp_i$ and hence that $[F,G]^{(\lambda,A)}$ is minimal at $\fp_i$ as well as at all primes where $[F,G]$ is already minimal. By iteratively applying such a transformation for each $i=1,\ldots,n$, we obtain a model that is minimal locally at all primes and hence is $R$-minimal.
\end{proof}

\section{Determining local minimal models}
\label{S:local}

Let $R$ be a discrete valuation ring with maximal ideal $\fp$, field of fractions $K$, uniformizer $\pi$ and valuation $v\colon K\to \ZZ\cup\{\infty\}$. We write $k=R/\fp$ for the residue field.

Let $\phi\in\Rat_d(K)$ be a rational function given by a model $[F,G]$ over $R$. In this section we develop a relatively efficient algorithm to compute a transformation
\begin{equation}\label{E:loctrans}
(\lambda,A)=(\pi^{e_1},\mat{\pi^{e_2}&\beta\\0&1})\in (\GG_m\times\GL_2)(K)
\end{equation}
of the form described in Proposition~\ref{P:localminimal}, such that $[\lambda F_A,\lambda G_A]$ is an $R$-minimal model of $[\phi]\in\calM_d(K)$. 
We do this by formulating a procedure that finds $e_1,e_2\in\ZZ$ and $\beta\in K$, or shows they do not exist, such that $\lambda F_A,\lambda G_A\in R[X,Y]$ and $v(\Res_d(\lambda F_A,\lambda G_A))<v(\Res_d(F,G))$.
First we observe a case where it is particularly easy to recognise that a model is minimal.
\begin{lemma}\label{L:trivial_minimal}
If $d$ is even and $v(\Res_d(F,G))<d$ or if $d$ is odd and $v(\Res_d(F,G))<2d$ then $[F,G]$ is an $R$-minimal model for $[\phi]$.
\end{lemma}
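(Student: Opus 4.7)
The plan is to argue by contradiction: assume $[F,G]$ is not $R$-minimal, and use the covariance formula in Proposition~\ref{P:restrans} together with the structural reduction in Proposition~\ref{P:localminimal} to force $v(\Res_d(F,G))$ to be at least the bound the lemma forbids.

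First I would invoke Proposition~\ref{P:localminimal}: non-minimality of $[F,G]$ means that some transformation
\[(\lambda,A)=\Big(\pi^{e_1},\,\mat{\pi^{e_2}&\beta\\0&1}\Big)\]
produces a strictly better model $[F',G']=[\lambda F_A,\lambda G_A]$ over $R$, with $v(\Res_d(F',G'))<v(\Res_d(F,G))$. Since $\det(A)=\pi^{e_2}$, Proposition~\ref{P:restrans} gives
\[v(\Res_d(F',G'))=2d\,e_1+d(d+1)\,e_2+v(\Res_d(F,G)).\]
Because $[F',G']$ lies in $R[X,Y]$ its resultant is in $R$, so the left-hand side is $\geq 0$; because $[F',G']$ is strictly better than $[F,G]$, the quantity $m:=2de_1+d(d+1)e_2$ is strictly negative. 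Combining these, $0\leq v(\Res_d(F,G))+m$ and $m<0$, so $v(\Res_d(F,G))\geq -m>0$.

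The key step is then a parity observation on $m$. Factor
\[m=d\bigl(2e_1+(d+1)e_2\bigr).\]
If $d$ is odd, then $d+1$ is even, hence $2e_1+(d+1)e_2$ is even and $m$ is a negative multiple of $2d$; in particular $m\leq -2d$, so $v(\Res_d(F,G))\geq 2d$, contradicting the odd case of the hypothesis. If $d$ is even, $m$ is only constrained to be a negative multiple of $d$, so $m\leq -d$ and $v(\Res_d(F,G))\geq d$, contradicting the even case.

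The only real subtlety is the parity argument, and it is what explains why the bounds for even and odd $d$ differ by a factor of two. The reduction in Proposition~\ref{P:localminimal} is essential at exactly this point: it pins $\det(A)$ down to a power of the uniformizer, so that the exponent $d(d+1)$ attached to $\det(A)$ in the covariance formula translates directly into divisibility of $m$ by $d$ (or $2d$, in the odd case). Without that reduction, $v(\det A)$ could in principle be decoupled from the integer structure and the tight divisibility bound would fail.
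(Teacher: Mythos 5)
Your proof is correct and the core argument is the same as the paper's: Proposition~\ref{P:restrans} forces the valuation of the resultant to change only by integer combinations of $2d$ and $d^2+d$, and the parity observation $\gcd(2d,d^2+d)=d\gcd(2,d+1)$ (which equals $d$ for $d$ even and $2d$ for $d$ odd) gives exactly the stated bounds. However, your appeal to Proposition~\ref{P:localminimal} is an unnecessary detour, and your closing claim that it is ``essential'' in pinning $\det(A)$ down to a power of the uniformizer is incorrect. For an arbitrary $(\lambda,A)\in(\GG_m\times\GL_2)(K)$, the quantities $v(\lambda)$ and $v(\det A)$ are already integers, simply because $\lambda,\det A\in K^\times$ and $v$ is a discrete valuation; no normalization of the transformation is needed for the change in valuation to be an integer multiple of $\gcd(2d,d^2+d)$. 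The paper's proof observes exactly this, bypassing Proposition~\ref{P:localminimal} entirely: any model over $R$ has resultant of non-negative valuation, and if $v(\Res_d(F,G))$ is already below $d\gcd(2,d+1)$, no transformation can drop it further without making it negative. Your version reaches the same conclusion, but the structural reduction you cite is doing no real work here.
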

\begin{proof}
Proposition~\ref{P:restrans} shows that transformations can only change the resultant by $\gcd(2d,d^2+d)$-th powers. Since a minimal model has $\Res_d(F,G)\in R$, it must have non-negative valuation. Therefore, if the valuation is already small enough, a transformation cannot reduce it and keep the model over $R$.
\end{proof}

If we do find such values, we repeat the procedure with the transformed model; otherwise we have shown that the original model is minimal.  In light of Proposition~\ref{P:restrans}, we need
\[2d e_1 +(d^2+d)e_2 < 0.\]
Without loss of generality we can take
\begin{equation}\label{E:e1eq}
e_1=-\min(v(F_A),v(G_A)).
\end{equation}

We write
\[F_A=\sum f_i' X^iY^{d-i}\text{ and }G_A= \sum g_i' X^iY^{d-i}.\]
It follows that
\begin{equation}\label{E:f_g_prime}
\begin{aligned}
f_j'=f_j'(e_2,\beta)&=\pi^{je_2}\sum_{i=j}^d\binom{i}{j}(f_i\beta^{i-j}-g_i\beta^{i-j+1}),\\
g_j'=g_j'(e_2,\beta)&=\pi^{(j+1)e_2}\sum_{i=j}^d\binom{i}{j}g_i\beta^{i-j}.
\end{aligned}
\end{equation}
Finding a valuation-reducing transformation amounts to finding $e_2\in\ZZ$ and $\beta\in K$ such that
\begin{equation}\label{E:ineqs}
v(f_i')> \frac{d+1}{2}e_2\quad\text{ and }\quad v(g_i')> \frac{d+1}{2}e_2\quad\text{ for all }i=0,\ldots,d.
 \end{equation}
We proceed by proving lower and upper bounds for $e_2$ given $F,G$ and then lower bounds on $v(\beta)$ given $e_2,F,G$.

\begin{lemma}\label{L:e2_upper_bound_helper}
Let $f,g\in R[z]$ of degrees at most $d$. Then for any $\beta\in K$ we have
\[\min(v(f(\beta)),v(g(\beta)))\leq v(\res(f,g)).\]
\end{lemma}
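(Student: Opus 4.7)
The plan is to derive a Bezout-type identity in $R[z]$ from the adjugate of the Sylvester matrix of $f$ and $g$, then evaluate at $\beta$ and split on the sign of $v(\beta)$. Setting $d_f = \deg f$ and $d_g = \deg g$, the Sylvester matrix $S$ has entries in $R$ and determinant $\res(f,g)$. Writing $S\,\vec m(z) = \vec w(z)$ with $\vec m(z) = (z^{d_f+d_g-1},\ldots,1)^T$ and $\vec w(z) = (z^{d_g-1}f(z),\ldots,f(z),z^{d_f-1}g(z),\ldots,g(z))^T$, left-multiplication by $S^{\adj}$ (whose entries lie in $R$) will produce, for each $i \in \{0,\ldots,d_f+d_g-1\}$, an identity
\[\res(f,g)\,z^i = A_i(z)\,f(z) + B_i(z)\,g(z)\]
in $R[z]$ with $\deg A_i < d_g$ and $\deg B_i < d_f$. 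Only the two extreme cases $i = 0$ and $i = d_f+d_g-1$ will be needed.

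For $v(\beta) \geq 0$ I evaluate the $i=0$ identity at $\beta$: since $\beta \in R$ and the coefficients of $A_0, B_0$ lie in $R$, one has $A_0(\beta), B_0(\beta) \in R$, which gives $v(\res(f,g)) \geq \min(v(f(\beta)), v(g(\beta)))$ immediately. For $v(\beta) < 0$ I use $i = d_f+d_g-1$. The degree bound on $A_i$ together with $v(\beta) < 0$ forces its top-degree monomial to dominate in valuation, so $v(A_i(\beta)) \geq (d_g-1)v(\beta)$ and analogously $v(B_i(\beta)) \geq (d_f-1)v(\beta)$. Substituting into the identity and cancelling the $(d_f+d_g-1)v(\beta)$ contribution on the left will yield
\[v(\res(f,g)) \geq \min\bigl(v(f(\beta)) - d_f v(\beta),\; v(g(\beta)) - d_g v(\beta)\bigr),\]
and since $v(\beta) < 0$ and $d_f, d_g \geq 0$ the correction terms $-d_f v(\beta)$ and $-d_g v(\beta)$ are non-negative, giving the claim.

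The main obstacle will be bookkeeping the degree bounds coming out of $S^{\adj}$ and verifying the identities hold over $R$ rather than only over $K$; the case analysis itself is mechanical. A more symmetric phrasing, avoiding the case split, is to write $\beta = u/v$ with $u,v \in R$ primitive (i.e.\ $\min(v(u),v(v)) = 0$), homogenize $f, g$ to forms of their natural degrees, and use the family of homogeneous Bezout identities $\res(f,g)\,X^{d_f+d_g-1-i}Y^i = \cdots$ obtained analogously, evaluating at $(X,Y) = (u,v)$ after choosing $i$ so that $u^{d_f+d_g-1-i}v^i$ has valuation $0$; primitivity of $(u,v)$ guarantees such an $i$ exists, and this handles both regimes of $v(\beta)$ uniformly.
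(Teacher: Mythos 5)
Your proposal is correct and essentially reproduces the paper's own proof: both derive Bezout identities $\res(f,g)\,z^{i} = A_i(z) f(z) + B_i(z) g(z)$ with bounded-degree cofactors in $R[z]$ (the paper cites \cite{ADS}*{Proposition~2.13c} for them, you obtain them from $S^{\adj}$), split on the sign of $v(\beta)$, use the $i=0$ identity when $\beta$ is integral, and use the top-index identity with the elementary bound $v(A_i(\beta)) \geq (\deg A_i)\,v(\beta)$ when $v(\beta)<0$. The only cosmetic difference is that the paper treats $f,g$ as degree-$d$ forms (hence $z^{2d-1}$ and cofactor degree $\leq d-1$), whereas you work with the actual degrees $d_f, d_g$.
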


\begin{proof} We first consider the case $v(\beta)\leq 0$.
Usual properties for resultants (see e.g.~\cite{ADS}*{Proposition~2.13c}; the proof there is stated for $R=\ZZ$, but is valid for arbitrary commutative rings) yield polynomials $U(z),V(z)\in R[z]$ of degree at most $d-1$ such that
\[Uf+Vg=z^{2d-1}\res(f,g).\]
In particular, we find that
\[v(\res(f,g))+(2d-1)v(\beta)\geq\min(v(U(\beta))+v(f(\beta)),v(V(\beta))+v(g(\beta))).\]
Since we have $v(U(\beta)),v(V(\beta))\geq (d-1)v(\beta)$, this yields
\[\min(v(f(\beta)),v(g(\beta)))\leq v(\res(f,g))+dv(\beta)\leq v(\res(f,g)).\]

For the case $v(\beta)\geq 0$ we use (see again e.g.~\cite{ADS}*{Proposition~2.13c}) that there are polynomials $U,V\in R[z]$ of degree at most $d-1$ such that
\[Uf+Vg=\res(f,g).\]
We have
\[\min(v(U(\beta))+v(f(\beta)),v(V(\beta))+v(g(\beta)))\leq v(\res(f,g)),\]
and since $v(U(\beta)),v(V(\beta))\geq 0$, the statement follows.
\end{proof}

\begin{lemma}\label{L:e2_bounds}
Let $[F,G]\in M_d(K)$ be a model over $R$. Let $f(z)=F(z,1)$ and $g(z)=G(z,1)$.
Let $d_G$ be the degree of $g$. Suppose $e_2\in\ZZ$ and $\beta\in K$ provide a solution to \eqref{E:ineqs}. Then we have
\[e_2>\begin{cases}
       -\frac{2}{2d_G-d+1} v(g_{d_G})&\text{if } d_G>\frac{1}{2}(d+1),\\
       -\frac{2}{d-1}v(f_d)&\text{if }d_G<d.
      \end{cases}
\]
Furthermore, we have
\[
  e_2 < \frac{2}{d-1}v(\res(f-zg,g)) = \begin{cases}
                           \frac{2}{d-1}v(\res(f,g))&\text{if } d_G<d,\\
                           \frac{2}{d-1}(v(\res(f,g))+v(g_d))&\text{if }d_G=d.
                          \end{cases}
\]
\end{lemma}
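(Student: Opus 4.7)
The strategy splits into three essentially independent computations: two lower bounds obtained by reading off the extreme coefficients of $F_A,G_A$, one upper bound obtained by applying the previous helper lemma at $j=0$, and a bookkeeping step that converts $\res(f-zg,g)$ into $\res(f,g)$ (with or without a factor of $g_d$).

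For the lower bounds I would stare directly at the formulas \eqref{E:f_g_prime}. The top coefficients are
\[f_d'=\pi^{de_2}(f_d-g_d\beta)\quad\text{and}\quad g_{d_G}'=\pi^{(d_G+1)e_2}g_{d_G},\]
the latter using $g_i=0$ for $i>d_G$. If $d_G<d$ then $g_d=0$, so $f_d'=\pi^{de_2}f_d$, and \eqref{E:ineqs} gives $de_2+v(f_d)>\tfrac{d+1}{2}e_2$, i.e.\ $e_2>-\tfrac{2}{d-1}v(f_d)$. If $d_G>\tfrac{d+1}{2}$ then \eqref{E:ineqs} for $g_{d_G}'$ gives $(d_G+1)e_2+v(g_{d_G})>\tfrac{d+1}{2}e_2$, i.e.\ $\tfrac{2d_G-d+1}{2}e_2>-v(g_{d_G})$; the hypothesis ensures $2d_G-d+1>0$, so dividing yields the claimed bound.

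For the upper bound I would set $h(z)=f(z)-zg(z)$ and evaluate \eqref{E:f_g_prime} at $j=0$, obtaining
\[f_0'=f(\beta)-\beta g(\beta)=h(\beta)\quad\text{and}\quad g_0'=\pi^{e_2}g(\beta).\]
The inequalities \eqref{E:ineqs} then read $v(h(\beta))>\tfrac{d+1}{2}e_2$ and $v(g(\beta))>\tfrac{d-1}{2}e_2$. Applying Lemma~\ref{L:e2_upper_bound_helper} to $h,g$ (of degree at most $d+1$) gives $\min(v(h(\beta)),v(g(\beta)))\leq v(\res(h,g))$. When $e_2\geq 0$ we have $\tfrac{d-1}{2}e_2\leq\tfrac{d+1}{2}e_2$, so $\tfrac{d-1}{2}e_2$ is strictly dominated by both valuations (even in the edge case $e_2=0$, where both valuations are strictly positive), giving $\tfrac{d-1}{2}e_2<v(\res(h,g))$ and hence $e_2<\tfrac{2}{d-1}v(\res(f-zg,g))$. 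When $e_2<0$ the bound is automatic since $h,g\in R[z]$ force $v(\res(h,g))\geq 0$.

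The remaining case analysis is a direct computation with the product-over-roots formula. Writing $\res(p,q)=(-1)^{mn}q_n^m\prod_{\beta_j}p(\beta_j)$ where $\beta_j$ runs over the roots of $q$ of degree $n$ and $p$ has degree $m$, and using $h(\beta_j)=f(\beta_j)$ at roots of $g$, one gets in the case $d_G<d$ that $h$ has degree at most $d$ with the same leading structure as $f$, whence $\res(f-zg,g)=\res(f,g)$; and in the case $d_G=d$ that $h$ has degree $d+1$ with leading coefficient $-g_d$, so the product formula produces one extra factor of $g_d$, yielding $v(\res(f-zg,g))=v(\res(f,g))+v(g_d)$. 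The main obstacle is precisely this bookkeeping step, because the univariate resultant depends sensitively on the declared degrees of its inputs; I would handle it by writing out both resultants as (leading coefficient)$^{\text{degree of the other}}\cdot\prod f(\beta_j)$ and comparing, which makes the ratio transparent.
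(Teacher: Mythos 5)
Your proof follows the paper's argument essentially step by step: the two lower bounds come from reading off $f'_d$ and $g'_{d_G}$ exactly as the paper does, and the upper bound comes from applying Lemma~\ref{L:e2_upper_bound_helper} to the pair $(f-zg,\,g)$ extracted from the $j=0$ inequalities in \eqref{E:ineqs}, again as in the paper. The only cosmetic divergence is at the very end: you compute the relation between $\res(f-zg,g)$ and $\res(f,g)$ via the product-over-roots expression, whereas the paper asserts it as "a straightforward exercise in Sylvester matrices"; both routes rest on the same observation $h(\beta_j)=f(\beta_j)$ at the roots of $g$ and yield the same bookkeeping. Your explicit split on the sign of $e_2$ for the upper bound is a slightly more detailed unpacking of the paper's "leads immediately," but it is not a different idea.

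One small caution worth flagging, because it affects both your write-up and the paper's: the equalities $\res(f-zg,g)=\res(f,g)$ (for $d_G<d$) and $v(\res(f-zg,g))=v(\res(f,g))+v(g_d)$ (for $d_G=d$) are asserted as exact identities, but they are only guaranteed when $\deg f=d$. If $\deg f<d$, the product-over-roots computation produces a power of $g_d$ (resp.\ a dropped factor of $g_{d_G}$) rather than a single one, so the displayed equality can fail. In those degenerate cases the genuine content — that $v(\res(f-zg,g))$ is a finite nonnegative integer, so the upper bound $e_2<\tfrac{2}{d-1}v(\res(f-zg,g))$ is real — is unaffected, since $f-zg$ and $g$ are coprime elements of $R[z]$. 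Your phrase "same leading structure as $f$" glosses over exactly this point; if you want the identity to be literally true you should state $\deg f=d$ (which does hold whenever $d_G<d$, since otherwise $\Res_d(F,G)=0$) or weaken the case $d_G=d$ to an inequality.
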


\begin{proof}
We use the notation $f_i,g_i,f'_i,g'_i$ as defined in \eqref{E:f_g_prime} and earlier.

We first prove the lower bounds. 
If $d_G>\frac{1}{2}(d+1)$, we consider $g'_{d_G}(e_2,\beta)=\pi^{(d_G+1)e_2}g_{d_G}$. Its valuation combined with \eqref{E:ineqs} gives the bound stated. If $d_G<d$ we have that $f'_d=\pi^{de_2}f_d$ and that $f_0\neq0$. Its valuation combined with \eqref{E:ineqs} yields the bound stated.

For the upper bound, we consider \eqref{E:ineqs} for $f'_0$ and $g'_0$. They yield
\[
v(f(\beta)-\beta g(\beta)) > \frac{d+1}{2}e_2\quad\text{ and }\quad v(g(\beta)) > \frac{d-1}{2}e_2.
\]
From Lemma~\ref{L:e2_upper_bound_helper} we obtain an upper bound on the minimum of the left hand sides of the inequalities, which leads immediately to the upper bound stated in the lemma. It is a straightforward exercise in Sylvester matrices to see that $\res(f-zg,g)=\res(f,g)$ if $\deg(g)<\deg(f)$ and that $\res(f-zg,g)=\pm g_d \res(f,g)$ if $\deg(f)\leq \deg(g)=d$. In either case this provides a finite upper bound, because $f,g$ are coprime.
\end{proof}

\begin{remark}\label{R:trivial_minimal}
Note that the argument that provides the lower bound for $e_2$ if $d_G>\frac{1}{2}(d+1)$ gives the upper bound $e_2<\frac{2}{d-1-2d_G}v(g_{d_G})$ if $d_G<\frac{1}{2}(d+1)$. In particular, if $f,g\in R[z]$ are monic and $\deg(g)\leq \frac{1}{2}\deg(f)$ then we have $v(g_{d_G})=v(f_d)=0$ and we see that a solution to \eqref{E:ineqs} would require both $e_2>0$ and $e_2<0$. It follows that the corresponding model $[F,G]$ is already a minimal model for $[f/g]$. 
\end{remark}

\begin{remark}
For obtaining the upper bound on $e_2$ we considered the inequalities in \eqref{E:ineqs} arising from $f_0'$ and $g_0'$, because those are guaranteed to provide a finite upper bound. However, \eqref{E:ineqs} gives rise to multiple inequalities
\begin{align*}
v\left(\sum_{i=j}^d\binom{i}{j}(f_i\beta^{i-j}-g_i\beta^{i-j+1})\right)>\frac{d+1-2j}{2}e_2\\
v\left(\sum_{i=j}^d\binom{i}{j}g_i\beta^{i-j}\right)>\frac{d-1-2j}{2}e_2,
\end{align*}
so applying Lemma~\ref{L:e2_upper_bound_helper} on any pair of them (with $2j< d+1$ resp.~$2j<d-1$) potentially yields a sharper upper bound on $e_2$.
\end{remark}

With Lemma~\ref{L:e2_bounds} we have restricted the possible $e_2$ to a finite set. For each possible $e_2$, we are left with determining a value $\beta\in K$ that satisfies \eqref{E:ineqs}. Note that $f'_j, g'_j$ are polynomial in $\beta$, so  after clearing denominators, we obtain a problem of the following form.

\begin{problem}\label{P:find_beta}
Given $\{(h_1,c_1),\ldots,(h_r,c_c)\}$ with
\[
h_1,\ldots,h_r\in R[z]\quad\text{ and }\quad c_1,\ldots,c_r\in \RR,
\]
determine $\beta\in K$ such that
\[ v(h_i(\beta)) >c_i\text{ for }i=1,\ldots,r,\]
or prove that no such $\beta$ exists.
\end{problem}

\begin{lemma}\label{L:beta_lower_bound}
Let $f=\sum_{i=0}^n f_i z^i\in R[z]$ be a polynomial of degree $n$. Let
\[B(f,c)=\min\left(\frac{c-v(f_n)}{n},\min\left\{\frac{v(f_i)-v(f_n)}{n-i}: i =0,\ldots,n-1\right\}\right),\]
then for any $\beta\in K$ such that $v(f(\beta))>c$ we have
$v(\beta)\geq B(f,c).$
\end{lemma}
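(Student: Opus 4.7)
The plan is to prove the contrapositive: assume $v(\beta) < B(f,c)$ and derive that $v(f(\beta)) \leq c$ (in fact strictly less than $c$), contradicting the hypothesis. The key observation is that if $v(\beta)$ is sufficiently negative, the leading term $f_n \beta^n$ dominates all other terms in the valuation, so the non-archimedean triangle inequality becomes an equality and we can compute $v(f(\beta))$ exactly.

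More precisely, set $b = v(\beta)$ and suppose $b < B(f,c)$. From the definition of $B(f,c)$, for every $i \in \{0, \ldots, n-1\}$ we have
\[b < \frac{v(f_i) - v(f_n)}{n-i},\]
which rearranges to $v(f_n) + n b < v(f_i) + i b$, i.e.\ $v(f_n \beta^n) < v(f_i \beta^i)$. Hence the minimum valuation among the summands of $f(\beta) = \sum_{i=0}^n f_i \beta^i$ is attained uniquely at $i = n$, and the strong triangle inequality gives
\[v(f(\beta)) = v(f_n \beta^n) = v(f_n) + n b.\]
On the other hand, $b < B(f,c) \leq \frac{c - v(f_n)}{n}$ gives $v(f_n) + n b < c$, so $v(f(\beta)) < c$, which contradicts $v(f(\beta)) > c$. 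Therefore $v(\beta) \geq B(f,c)$.

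This proof has no real obstacle beyond carefully tracking which inequalities are strict. The only subtle point is that $B(f,c)$ need not be an integer while $v(\beta) \in \ZZ \cup \{\infty\}$, but this causes no issue: the inequality $v(\beta) \geq B(f,c)$ is a comparison of real numbers and the argument above yields it directly. One minor edge case worth mentioning is that if some coefficient $f_i$ vanishes then $v(f_i) = \infty$ and the corresponding ratio $(v(f_i) - v(f_n))/(n-i) = \infty$ simply drops out of the minimum without affecting anything.
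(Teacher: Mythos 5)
Your proof is correct and is essentially the contrapositive formulation of the paper's own one-line argument: the paper observes directly that $v(f(\beta))>c$ forces either $v(f_n\beta^n)>c$ or $v(f_n\beta^n)\geq v(f_i\beta^i)$ for some $i<n$ and then solves each for $v(\beta)$, whereas you assume $v(\beta)<B(f,c)$, deduce strict domination of the leading term together with $v(f_n\beta^n)<c$, and conclude $v(f(\beta))<c$. The underlying mechanism (the leading term dominating under the ultrametric inequality) is identical, so this is the same proof in slightly different dress.
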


\begin{proof}
We observe that if $v(f(\beta))>c$ then we must have $v(f_n\beta^n)>c$ or $v(f_n\beta^n)\geq v(f_i\beta^i)$ for some $i=0,\ldots,n-1$. Solving for $v(\beta)$ provides the bound stated.
\end{proof}

Using Lemma~\ref{L:beta_lower_bound} we see that if $\beta$ is a solution for Problem~\ref{P:find_beta} and $B=\max\{B(h_i,c_i): i = 1,\ldots,r\}$, then $\beta=\pi^{-B}\beta'$ for some $\beta'\in R$, which itself is a solution to the problem
\begin{equation}\label{E:V}
V=\begin{cases}
\{(\pi^{\deg(h_i)B}h_i(\pi^{-B}z),c_i+B):i = 1,\ldots,r\}&\text{ if }B> 0,\\
\{(h_i(\pi^{-B}z),c_i):i = 1,\ldots,r\}&\text{ if }B\leq 0.
\end{cases}
\end{equation}
Because we have now reduced the problem to find a solution $\beta\in R$, we can use reduction. For $\beta\in R$ we write $\overline{\beta}$ for its residue class in $k$ and for $h\in R[z]$ we write $\overline{h}\in k[z]$ for its coefficient-wise reduction.
We obtain the following algorithm.

\begin{algorithm}\label{A:inequalitysolutions}
$\mathsf{InequalitySolutions}(V)$
\begin{description}
\itemsep=0.2em
\item[Input] $V=\{(h_1,c_1),\ldots,(h_r,c_r)\}\subset R[z]\times\RR$
\item[Output] An element $\beta\in R$ such that $v(h_i(\beta))>c_i$ for $i=1,\ldots,r$ or \textsf{none} if no such solution exists. 
\end{description}
\medskip
\begin{enumerate}[\hspace{2em}1)\;]
\itemsep=0.2em
 \item $V':=\{(\pi^{-v(h_i)}h_i,c_i-v(h_i))\text{ for those } i=1,\ldots,r \text{ for which }h_i\neq 0\text{ and } c_i\geq v(h_i)\}$.
 \item \IF $V'=\emptyset$: \RETURN $0$.
 \item $\overline{g}:=\gcd(\overline{h'}_i: (h'_i,c'_i)\in V')$.
 \item Let $W\subset R$ be a set of representatives of the roots of $\overline{g}(z)$ in $k$.
 \item \FOR $\beta_0\in W$:
 \item \tab $V'':=\{(\pi^{-1}h'_i(\beta_0+\pi z),c'_i-1): (h'_i,c'_i)\in V'\}$,
 \item \tab $\beta_1:=\textsf{InequalitySolutions}(V'')$,
 \item \tab \IF $\beta_1 \neq \mathsf{none}$: \RETURN $\beta_0+\pi\beta_1$.
 \item \IF $W=\emptyset$ or $\beta_1 = \mathsf{none}$ for all $\beta_0\in W$: \RETURN \textsf{none}.
\end{enumerate}
\end{algorithm}
Since the algorithm is recursive, we need to argue it will finish in finite time. The valuation bounds in $V''$ are decreased by at least $1$ from the ones that occur in $V$. Furthermore, note that any conditions with a negative valuation bound get removed in step 1) and that the algorithm terminates if $V'=\emptyset$. This means that $\max c_i$ is a bound on the recursion depth of the algorithm.

Furthermore, note that the polynomials in $V'$ all have non-zero reduction, so $\overline{g}$ computed in 3) is well defined. That means that $W$ in step 4) is a finite set, so the loop in 5) is finite. This establishes that the algorithm finishes in finite time.

For correctness, first note that in 1) we ensure that the polynomials in $V'$ have integral coefficients and that at least one of them is a unit in $R$ and that all vacuous conditions are removed from $V'$. If no conditions remain, then any $\beta\in R$ is a valid solution, so if a value is returned in step 2), it is correct.

Furthermore, it is clear that any solution would have to reduce to a root of $h'_i$ in $k$, for all $i$.
This means that $\beta=\beta_0+\pi\beta_1$, where $\beta_0$ represents such a root and $\beta_1\in R$, where $\beta_1$ satisfies the conditions represented by $V''$. If we find such a $\beta_1$ in step 7), we return the resulting solution in step 8). On the other hand, if we cannot find a suitable $\beta_1$ for any of the $\beta_0$, we have shown that no solutions exist. Note that the set $W$ can be empty, in which case there are no $\beta_0$ to try and \textsf{none} is returned immediately in step 9).

An algorithm to compute an $R$-minimal model for $[\phi]\in\calM_d(K)$ given by a model $[f,g]\in M_d(K)$ is now a matter of bookkeeping.

\begin{algorithm}\label{A:localminimalmodel}
$\mathsf{LocalMinimalModel}(f_\In,g_\In)$
\begin{description}
\itemsep=0.2em
\item[Input] $f_\In,g_\In\in R[z]$ with $\max(\deg(f_\In),\deg(g_\In))=d$ and $\phi=f_\In/g_\In\in \Rat_d(K)$.
\item[Output] $e_{1,\total},e_{2,\total}\in \ZZ$ and $\beta_\total\in K$ describing a transformation $(\lambda,A)$ as in Equations \eqref{E:loctrans} and \eqref{E:e1eq} and $f,g\in R[z]$ such that $[f,g]=[f_\In,g_\In]^{(\lambda,A)}$ is a minimal model for $[\phi]\in\calM_d(K)$.

If $[f_\In,g_\In]$ is already minimal then $[f,g]=[f_\In,g_\In]$ and $(e_1,e_2,\beta)=(0,0,0)$.
\end{description}
\medskip
\begin{enumerate}[\hspace{2em}1)\;]
 \itemsep=0.2em
\item $e_{1,\total},e_{2,\total},\beta_\total:=0,0,0$ and $f,g:=f_\In,g_\In$.
\item $e_1:=-\min(v(f),v(g))$; $e_{1,\total}:=e_{1,\total}+e_1$; $f:=\pi^{e_1} f$; $g:=\pi^{e_1} g$.
\item \FOR $e_2$ in the range given by Lemma~\ref{L:e2_bounds}:
\item \tab $V':=\{(f'_i,\frac{d+1}{2})\}\cup\{(g'_i,\frac{d+1}{2})\}$ as in Equation~\eqref{E:ineqs}.
\item \tab Let $V$ be as in Equation~\eqref{E:V}, where $B:=\max\{ B(h_i,c_i) : (h_i,c_i)\in V'\}$.
\item \tab $\beta':=\mathsf{InequalitySolutions}(V)$.
\item \tab \IF $\beta'\neq\mathsf{none}$:
\item \tab\tab $\beta:=\pi^{-B}\beta'$; $f:=f(\pi^{e_2}z+\beta)-\beta g(\pi^{e_2}z+\beta)$;
$g:=\pi^{e_2} g(\pi^{e_2}z+\beta)$
\item \tab\tab $\beta_\total:=\beta_\total+\pi^{e_{2,\total}}\beta$; $e_{2,\total}:=e_{2,\total}+e_2$,
\item \tab\tab \GOTO step 2).
\item \RETURN $(e_{1,\total},e_{2,\total},\beta_\total)$, $(f,g)$.
\end{enumerate}
\end{algorithm}

\section{Determining minimal models over principal ideal domains}

With Algorithm~\ref{A:localminimalmodel} in place, we can turn the procedure sketched in the proof of Corollary~\ref{C:minimal_model} into an algorithm as well. In this section, let $R$ be a principal ideal domain with field of fractions $K$. For a prime ideal $\fp$ we write $R_\fp$ for the localization of $R$ at $\fp$ (we do not need a completion for our purposes). We write $k_\fp$ for its residue class field $R/\fp$. As a uniformizer in $R_\fp$ we choose a generator $\pi\in R$ of $\fp=\pi R$. Furthermore, when we need representatives of $k_\fp$ in $R_\fp$, we assume that we take elements from $R$.

\begin{algorithm}\label{A:globalminimalmodel}
$\mathsf{MinimalModel}(f_\In,g_\In)$
\begin{description}
\itemsep=0.2em
\item[Input] $f_\In,g_\In\in R[z]$ with $\max(\deg(f_\In),\deg(g_\In))=d$ and $\phi=f_\In/g_\In\in \Rat_d(K)$.
\item[Output] $\lambda_\total,\alpha_\total,\beta_\total\in K$ and $f,g\in R[z]$ with
\[(\lambda,A)=(\lambda, \mat{\alpha&\beta\\0&1})\]
such that $[f,g]=[f_\In,g_\In]^{(\lambda,A)}$ is an $R$-minimal model of $[\phi]\in\calM_d(K)$.
If $[f_\In,g_\In]$ is already minimal then $[f,g]=[f_\In,g_\In]$ and $(\lambda_\total,\alpha_\total,\beta_\total)=(1,1,0)$.
\end{description}
\medskip
\begin{enumerate}[\hspace{2em}1)\;]
 \itemsep=0.2em
\item $f,g:=f_\In,g_\In$.
\item Compute the prime factorization $\fp_1^{\epsilon_1},\ldots,\fp_r^{\epsilon_r}=(\Res_d(f,g))R$.
\item \FOR $\fp\in\{\fp_i : i = 1,\ldots,r \text{ and }\epsilon_i\geq d\gcd(2,d+1)\}:$
\item \tab Determine $\pi\in R$ such that $\fp=\pi R$ and choose representatives of $k_\fp$ in $R$.
\item \tab $(e_1,e_2,\beta),(f,g):=\mathsf{LocalMinimalModel}(f,g)$ with respect to $R_\fp$.
\item \tab $\lambda_\total=\lambda_\total\pi^{e_1}$; $\beta_\total:=\beta_\total+\alpha_\total\beta$; $\alpha_\total=\alpha_\total\pi^{e_2}$
\item \RETURN $(\lambda_\total,\alpha_\total,\beta_\total)$, $(f,g)$.
\end{enumerate}
\end{algorithm}
Note that in step 3) we use Lemma~\ref{L:trivial_minimal} to reduce the set of primes to consider. Furthermore, in step 4) we take care to choose $\pi$ and representatives of $k_\fp$ such that the transformation computed to ensure $R_\fp$-minimality in step 5) does not affect the minimality at any other primes. That means we can simply compose the transformations to obtain one that transforms the given model into an $R$-minimal one.

\section{A counterexample to some dynamical analogue of Szpiro's conjecture}
\label{S:szpiro}

In an attempt to formulate a dynamical analogue of Szpiro's conjecture, Silverman suggests the following definition of \emph{conductor} \cite{ADS}*{Section 4.11}.
\begin{definition} Let $R$ be a Dedekind domain with field of fractions $K$. For $\phi\in \Rat_d(K)$ we define
\[\Cond_R([\phi])=\sqrt{\Res_R([\phi])},\]
where $\sqrt{I}$ denotes the \emph{radical ideal} of $I$.
\end{definition}
One analogue of Szpiro's conjecture \cite{ADS}*{Conjecture~4.97} would predict the existence of a bound $n$ and an ideal $J\subset R$ such that
\[J\,\Cond_R([\phi])^n\subset \Res_R([\phi])\text{ for all }\phi\in \Rat_d(K).\]
If $d\geq 3$ and $h(x)\in R[x]$ is a monic polynomial of degree at most $\frac{1}{2}(d-2)$ and $\pi\in R$ such that $J\notin \pi R$, we see that the rational function
\[\phi(x)=\frac{x^d+\pi^{n+1}}{h(x)x}\]
is a counter example, since the given model is locally minimal at all places of $R$ by Remark~\ref{R:trivial_minimal} and therefore globally minimal, but $\Res_d(x^d+\pi^{n+1},h(x)x)$ is divisible by $\pi^{n+1}$.
See also \cite{STW:resultant_conductor} for counterexamples with $d=2$ and an in-depth treatment of possible alternative formulations of the concept of \emph{conductor}. The same paper also discusses some approaches to proving that certain models are minimal. In their Section~3 they consider an approach similar to the valuation-based part of Section~\ref{S:local}. Indeed, without a systematic method for determining possible values for $\beta$ (the utility of Algorithm~\ref{A:inequalitysolutions}), they conclude that their methods are likely insufficient in general. However, in their Section~5 they present some methods based on explicit models for the moduli space $\calM_d$ and its higher level covers. When these work, they likely provide an elegant alternative to Algorithm~\ref{A:localminimalmodel}, although for large $d$ such models might be hard to compute.

\section{The structure of the set of minimal models of a map}
\label{S:mintran}

Let $R$ be a principal ideal domain with field of fractions $K$ and let $\phi\in \Rat_d(K)$. Proposition~\ref{C:minimal_model} guarantees the existence of an $R$-minimal model $[F,G]\in M_d(K)$ for $[\phi]$ and Algorithm~\ref{A:globalminimalmodel} provides a procedure to compute one, given a sufficiently explicit description of $\phi$. In this section we consider the set of all such models
\[\Min_R([\phi])=\{[F,G]: F,G\in R[X,Y] \text{ and }[F,G] \text{ is an $R$-minimal model for }[\phi]\}.\]
It is immediate that $\Min_R([\phi])$ is stable under the action of
$(\GG_m\times \GL_2)(R)$. It can be bigger than a single orbit, as the following example shows.
\begin{example}\label{E:mult_orbit}
Let $n$ be a positive integer and suppose that $c\in\ZZ$ is not $0,\pm 1$. Consider the $\ZZ$-model $[F,G]=[z^{2n+1}-c^{n+1},z^n]$. By Remark~\ref{R:trivial_minimal}, the model is $\ZZ$-minimal. Conjugation by the transformation
\[(\lambda,A)=(c^{-n-1},\mat{c&0\\0&1})\in (\GG_m\times\GL_2)(\QQ)\]
yields the model $[F,G]^{(\lambda,A)}=[c^nz^{2n+1}-1,z^n]$, which has the same resultant and hence is also minimal. It is straightforward to check that these two models are not in the same $(\GG_m\times \GL_2)(\ZZ)$-orbit, for instance by verifying that the set of fixed points of $\phi$ has a trivial stabilizer in $\PGL_2(\QQ)$ and noting that the given transformation does \emph{not} map to $\PGL_2(\ZZ)$.
\end{example}

Note that the rational function in Example~\ref{E:mult_orbit} is of degree $2n+1$, which is odd.

\begin{question} Does there exist a rational function $\phi\in\Rat_d(\QQ)$ with $d$ even, such that $\Min_R([\phi])$ consists of a single $(\GG_m\times\GL_2)(\ZZ)$-orbit?
\end{question}

If $[\phi]$ admits a minimal model $[F,G]$, we can consider the set of transformations
\[\mathrm{MinTran}_R([F,G])=\{(\lambda,A)\in(\GG_m\times\GL_2)(K): [F,G]^{(\lambda,A)}\in\Min_R([\phi])\}.\]
As remarked, this set can be decomposed as a union of left cosets of $(\GG_m\times \GL_2)(R)$. We make some basic observations on the number of cosets.

\begin{proposition}\label{P:mintran_local}
Let $R$ be a discrete valuation ring with field of fractions $K$ and suppose that $[F,G]\in M_d(K)$ is an $R$-model with $\Res_d(F,G)\in R^\times$. Then
\[\mathrm{MinTran}_R([F,G])=(\GG_m\times \GL_2)(R).\]
\end{proposition}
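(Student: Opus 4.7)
The easy inclusion $(\GG_m \times \GL_2)(R) \subseteq \mathrm{MinTran}_R([F, G])$ is immediate from Proposition~\ref{P:restrans}: for integral $(\lambda, A)$ the scaling factor $\lambda^{2d}\det(A)^{d(d+1)}$ is a unit in $R$, and the action visibly preserves $R[X, Y]$, so $[\lambda F_A, \lambda G_A]$ is again an $R$-model with resultant in $R^\times$, hence in $\Min_R([\phi])$.

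For the reverse inclusion, start by normalizing. The kernel of the action of $\GG_m \times \GL_2$ on $M_d$ contains the scalar pairs $(\mu^{-(d+1)}, \mu I)$ for $\mu \in K^\times$, under which $\mathrm{MinTran}_R([F,G])$ is obviously stable, so one may modify $(\lambda, A)$ within $\mathrm{MinTran}_R([F, G])$ to arrange that $A \in M_2(R)$ has primitive content, i.e.\ $\min_{i,j} v(A_{ij}) = 0$. Then $v(\det A) \geq 0$, and Proposition~\ref{P:restrans} applied to the fact that both $\Res_d(F, G)$ and $\Res_d(\lambda F_A, \lambda G_A)$ are units yields the numerical constraint $2v(\lambda) + (d+1) v(\det A) = 0$. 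If $v(\det A) = 0$ this already gives $A \in \GL_2(R)$ and $\lambda \in R^\times$, concluding the argument.

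The substantive content is therefore to rule out $v(\det A) \geq 1$. Suppose this, and set $n = -v(\lambda) = (d+1) v(\det A)/2$, which the resultant constraint forces to be a positive integer; then $\lambda F_A, \lambda G_A \in R[X, Y]$ forces $F_A, G_A \in \pi^n R[X, Y]$. The main move is a matrix-inversion trick applied to the defining identity $\mat{F_A \\ G_A} = A^\adj \mat{F \circ A \\ G \circ A}$: using $A A^\adj = (\det A) I$, this inverts to
\[
\mat{F(\alpha X + \beta Y,\, \gamma X + \delta Y) \\ G(\alpha X + \beta Y,\, \gamma X + \delta Y)} = (\det A)^{-1}\, A \mat{F_A \\ G_A}.
\]
Because $A$ has entries in $R$, each coefficient on the right has valuation at least $n - v(\det A) = (d-1)v(\det A)/2$, which is a positive integer for $d \geq 2$. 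Reading off the $X^d$ and $Y^d$ coefficients then yields $v(F(\alpha, \gamma)), v(G(\alpha, \gamma)), v(F(\beta, \delta)), v(G(\beta, \delta)) \geq 1$.

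To finish, invoke the primitivity of $A$: at least one of the columns $(\alpha, \gamma)$, $(\beta, \delta)$ reduces to a nonzero vector of $\bar k^2$, giving a well-defined point of $\PP^1(\bar k)$ that the vanishings above identify as a common zero of $\bar F$ and $\bar G$. This contradicts $\Res_d(\bar F, \bar G) \neq 0$, which is forced by $\Res_d(F, G) \in R^\times$. The hard part will be recognizing the matrix-inversion step: once one transfers the divisibility of $F_A, G_A$ by $\pi^n$ across to $F$ and $G$ evaluated along the columns of $A$, the good-reduction hypothesis on $[F, G]$ delivers the contradiction almost for free, and the primitivity normalization of $A$ is precisely what guarantees the common zero actually lives in $\PP^1(\bar k)$ rather than evaporating.
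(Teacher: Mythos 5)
Your proof is correct, and it takes a genuinely different route from the paper's.

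The paper first passes to an unramified extension of $R$ (to enlarge the residue field) so that a preliminary conjugation by an element of $\GL_2(R)$ makes the leading coefficients $f_d, g_d$ units; this lets it invoke the Proposition~\ref{P:localminimal} argument to reduce to transformations of the form $(\pi^{e_1}, \mat{\pi^{e_2}&\beta\\0&1})$, at which point Lemma~\ref{L:e2_bounds} (with equalities) forces $e_2 = 0$ and Lemma~\ref{L:beta_lower_bound} forces $\beta \in R$. Your argument instead normalizes $A$ to have primitive content in $M_2(R)$ via the kernel element $(\mu^{-(d+1)}, \mu I)$, reads off $2v(\lambda) + (d+1)v(\det A) = 0$ from Proposition~\ref{P:restrans}, and rules out $v(\det A) \geq 1$ by inverting $\mat{F_A\\G_A} = A^{\adj}\mat{F\circ A\\G\circ A}$ to $\mat{F\circ A\\G\circ A} = (\det A)^{-1}A\mat{F_A\\G_A}$ and evaluating at the columns of $A$, producing a common zero of $\bar F, \bar G$ in $\PP^1(k)$ that contradicts $\Res_d(\bar F, \bar G) \neq 0$. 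What your route buys: it is entirely self-contained, works over $R$ without any base change, and does not rely on the upper-triangular reduction or the valuation-bound lemmas from Section~\ref{S:local}. What the paper's route buys: it reuses machinery already built for Algorithm~\ref{A:localminimalmodel}, so it is economical to state in context; and the reduction to upper-triangular transformations is structurally informative since the whole section is organized around them. One small imprecision in your writeup: the common zero you exhibit lies in $\PP^1(k)$, not merely $\PP^1(\bar k)$, since you reduce a primitive column of $A$ over the residue field itself; this is harmless, but worth saying since it shows the contradiction is already visible over $k$. Also note that your step $n = (d+1)v(\det A)/2 \in \ZZ_{>0}$ implicitly handles the parity obstruction: if $(d+1)v(\det A)$ is odd there is no valid $\lambda$ and you are done even earlier, and the subsequent bound $n - v(\det A) = (d-1)v(\det A)/2$ is then an integer $\geq 1$ automatically, as you correctly observe.
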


\begin{proof}
Let us assume that $[F,G]$ is a model as given and that $(\lambda,A)\in\mathrm{MinTran}_R([F,G])$. We will show that $\lambda\in R^\times$ and $A\in \GL_2(R)$.

First we show that we can assume that the leading coefficients $f_d$ and $g_d$ are units in $R^\times$. We consider the reduction $\overline{F},\overline{G}\in k[X,Y]$. Our resultant condition implies that $[\overline{F},\overline{G}]\in M_d(k)$. We write $\overline{\phi}$ for the corresponding rational function. We have $f_d,g_d\in R^\times$ if and only if $\overline{\phi}(\infty)\notin\{0,\infty\}$. Note that $\overline{\phi}$ has at most $d+1$ fixed points, so if $\#k > d$ then there are points in $P,Q\in\PP^1(k)$ such that $P$ is not a fixed point and $\overline{\phi}(P)\neq Q$.
We can find a transformation $\overline{T}\in\GL_2(k)$ such that $\overline{T}(\infty)=P$ and $\overline{T}(0)=Q$. We lift $\overline{T}$ to $T\in\GL_2(R)$. It follows that $T^{-1}\phi T$ has the desired property. Since $A\in \GL_2(R)$ if and only if $TA\in\GL_2(R)$, we can restrict to $f_d,g_d$ being units, provided $\#k>d$. However, writing $R^\mathrm{unr}$ for an unramified extension of $R$, we have that $\GL_2(R)=\GL_2(R^\textrm{unr})\cap\GL_2(K)$, so it is sufficient to prove the statement for a sufficiently large unramified extension of $R$. This means we can assume that $\# k$ is sufficiently large and hence that $f_d,g_d\in R^\times$.

We can adapt the results in Section~\ref{S:local}
to determine minimality-preserving transformations by changing the inequalities in \eqref{E:ineqs} to equalities. The argument for Proposition~\ref{P:localminimal} allows us to assume that the transformation is of the form
\[(\lambda,A)=(\pi^{e_1},\mat{\pi^{e_2}&\beta\\0&1})\in (\GG_m\times\GL_2)(K).\]
The claim follows if we can show that $e_2=0$ and $\beta\in R$, since then obviously $e_1=0$.
Indeed from Lemma~\ref{L:e2_bounds} we obtain that $e_2=0$ and from  Lemma~\ref{L:beta_lower_bound} we find that $\beta\in R$. This proves the proposition.
\end{proof}

\begin{example}\label{E:no_affine_minimal_model}
Let $\alpha=\sqrt{-5}$, let $R=\ZZ[\alpha]$ and let $K=\QQ(\alpha)$. We consider
\[\psi(z)=z^2\in K(z).\]
Since $\Res_2(z^2,1)=1$, we see that $\psi$ is minimal and Proposition~\ref{P:mintran_local} yields that
\begin{equation}\label{E:mintran_trivial}
\mathrm{MinTran}_R([z^2,1])\subset\bigcap_{\text{all primes }\fp}(\GG_m\times\GL_2)(R_\fp)=(\GG_m\times\GL_2)(R).
\end{equation}

We consider
\[M=\mat{2&1\\1+\alpha&1} \text{ and } \phi(z)=M\circ\psi\circ M^{-1}=\frac{2z^2+(2\alpha-2)z-\alpha-1}{3z^2+(2\alpha-4)z-\alpha}.\]
We claim that $[\phi]_1$ does not have an $R$-affine minimal model, whereas of course $[\phi]$ does have the $R$-minimal model $[z^2,1]$.
This shows that a non-trivial class group for $R$ can obstruct obtaining affine minimal models even in the presence of a minimal model.

Suppose that $A=\mat{a&b\\0&d}\in\Aff_2(K)$ such that $A\circ\phi\circ A^{-1}$ is represented by an $R$-minimal model. Then
$A^{-1}M^{-1}\circ\psi\circ MA$ is represented by an $R$-minimal model, so \eqref{E:mintran_trivial} yields
\[MA=\mat{2&1\\1+\alpha&1}\mat{a&b\\0&d}=\mat{2a&2b+d\\(1+\alpha)a&(1+\alpha)b+d}\in\GL_2(R).\]
Since the ideal $\fp_2=2R+(1+\alpha)R$ is of norm $2$ and non-principal, we see that $2a,(1+\alpha)a\in R$ implies that $a\in R$. But then $\det(MA)\in \fp_2$, which contradicts that $MA\in\GL_2(R)$.
\end{example}

\begin{proposition} Let $K$ be a global field and suppose that its ring of integers $R$ is a principal ideal domain. Let $[F,G]\in M_d(K)$ be an $R$-minimal model for $[\phi]\in\calM_d(K)$. Then
\[\mathrm{MinTran}_R([F,G])\]
is a finite union of left-cosets of $(\GG_m\times\GL_2)(R)$.
\end{proposition}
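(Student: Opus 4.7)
The plan is a local-global argument over the (principal) resultant ideal. I would first factor $\Res_d(F,G)R=\fp_1^{\epsilon_1}\cdots\fp_n^{\epsilon_n}$ and set $S=\{\fp_1,\ldots,\fp_n\}$. Since $R$ is a PID this set is finite, and Proposition~\ref{P:mintran_local} applied to each localization $R_\fp$ with $\fp\notin S$ gives $\mathrm{MinTran}_{R_\fp}([F,G])=(\GG_m\times\GL_2)(R_\fp)$, so every element of $\mathrm{MinTran}_R([F,G])\subset\mathrm{MinTran}_{R_\fp}([F,G])$ automatically lies in $(\GG_m\times\GL_2)(R_\fp)$ at every prime outside $S$.

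At each prime $\fp\in S$ I would show $\mathrm{MinTran}_{R_\fp}([F,G])$ is a finite union of left cosets of $(\GG_m\times\GL_2)(R_\fp)$, by adapting the normalization in Proposition~\ref{P:localminimal}. Every left coset contains a representative of the form $(\pi^{e_1},\mat{\pi^{e_2}&\beta\\0&1})$. The analysis of Lemma~\ref{L:e2_bounds}, applied to the minimality-preserving (non-strict) analogue of \eqref{E:ineqs}, bounds $e_2$ to a finite range; Lemma~\ref{L:beta_lower_bound} gives a lower bound $v(\beta)\geq B$; and right-multiplication by the upper-unipotent subgroup of $\GL_2(R_\fp)$ shifts $\beta$ by $\pi^{e_2}R_\fp$, so $\beta$ is determined modulo $\pi^{e_2}R_\fp$ and lies in the quotient $\pi^B R_\fp/\pi^{e_2}R_\fp\cong R_\fp/\pi^{e_2-B}R_\fp$. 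This quotient is finite because $R/\fp$ is finite --- exactly where the global-field hypothesis is used. The exponent $e_1$ is then forced by the integrality-and-primitivity requirement on the transformed model, giving finitely many local cosets in all.

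To glue the local information I would use the natural reduction map
\[\Phi\colon \mathrm{MinTran}_R([F,G])/(\GG_m\times\GL_2)(R) \longrightarrow \prod_{\fp\in S}\mathrm{MinTran}_{R_\fp}([F,G])/(\GG_m\times\GL_2)(R_\fp),\]
and verify that $\Phi$ is injective. If two representatives $(\lambda,A)$ and $(\lambda',A')$ have the same image at every $\fp\in S$, then $(\lambda',A')(\lambda,A)^{-1}\in(\GG_m\times\GL_2)(R_\fp)$ at each $\fp\in S$, and by the preceding $\fp\notin S$ argument also at each $\fp\notin S$; since $R$ is a Dedekind domain we have $(\GG_m\times\GL_2)(R)=\bigcap_\fp(\GG_m\times\GL_2)(R_\fp)$, so the ratio lies in $(\GG_m\times\GL_2)(R)$ and the two cosets coincide. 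The target of $\Phi$ is a finite product of finite sets, so the source is finite.

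The main obstacle will be the bad-prime finiteness. The subtle point is verifying that right-multiplication by upper-unipotent elements of $\GL_2(R_\fp)$ really does absorb the integral part of $\beta$, so that the genuine parameter is a residue in the finite set $R_\fp/\pi^{e_2-B}R_\fp$; this is the exact step where finiteness of $R/\fp$ enters and distinguishes the global-field setting from an arbitrary PID.
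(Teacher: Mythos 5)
Your argument is correct in spirit and takes a genuinely different route from the paper's. The paper proceeds globally: after normalising a coset representative to $(\lambda,\mat{\alpha&\beta\\0&1})$ with $\alpha$ an $S$-unit (via Propositions~\ref{P:aff_decomp} and~\ref{P:mintran_local}), it uses Lemma~\ref{L:e2_bounds} to bound the valuations of $\alpha$ at the finitely many bad primes, yielding finitely many choices of $\alpha$ modulo $R^\times$, and then uses Lemma~\ref{L:beta_lower_bound} together with the fact that the coset only sees $\beta$ modulo a lattice to get finitely many $\beta$. You instead prove local finiteness of $\mathrm{MinTran}_{R_\fp}([F,G])/(\GG_m\times\GL_2)(R_\fp)$ at each $\fp\in S$ and then glue via an explicit reduction map $\Phi$; the injectivity of $\Phi$ --- a step the paper never needs to make explicit, since it bounds $\alpha$ and $\beta$ globally in one stroke --- follows from $(\GG_m\times\GL_2)(R)=\bigcap_\fp(\GG_m\times\GL_2)(R_\fp)$ together with Proposition~\ref{P:mintran_local} at the good primes, and your verification of this is sound. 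Your decomposition cleanly separates the two finiteness inputs: the $\fp$-adic valuation bounds supplied by Lemmas~\ref{L:e2_bounds} and~\ref{L:beta_lower_bound}, and the PID structure entering through the intersection of localizations. Both proofs invoke the global-field hypothesis (finiteness of $R/\fp$) at exactly the same spot, namely to bound $\beta$ inside a finite quotient of $R_\fp$, and you identify that spot explicitly. One small correction: to test whether two elements lie in the same \emph{left} coset $(\lambda,A)(\GG_m\times\GL_2)(R)$ the criterion is $(\lambda,A)^{-1}(\lambda',A')\in(\GG_m\times\GL_2)(R)$, not $(\lambda',A')(\lambda,A)^{-1}$ as you wrote; since the subgroup is not normal in $(\GG_m\times\GL_2)(K)$ these conditions differ, though your argument survives once the order is swapped. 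You also share with the paper an unremarked subtlety: the normalization of a coset representative to the form $(\pi^{e_1},\mat{\pi^{e_2}&\beta\\0&1})$ uses the ``same effect'' identity from the proof of Proposition~\ref{P:localminimal}, which passes to an equivalent transformation rather than staying within the given $(\GG_m\times\GL_2)(R_\fp)$-coset, so it deserves a word of justification; but this is present in the paper's own treatment of Proposition~\ref{P:mintran_local} too, so it is not a defect particular to your proposal.
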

\begin{proof}
We have to establish that a finite union suffices. Let $S$ be the finite set of places where $\Res_d(F,G)$ is not a unit. We write $R_S$ for the ring of $S$-integers. Since $K$ is a \emph{global} field, we have that all residue fields are finite and hence that $R_S^\times$ is finitely generated.
Proposition~\ref{P:aff_decomp} shows that each coset has a representative in $(\GG_m\times\Aff_2)(K)$ and Proposition~\ref{P:mintran_local} shows that we can take the representatives of the form
\[
(\lambda,\mat{\alpha&\beta\\0&1}),
\]
where $\lambda=1/\alpha$ and $\alpha$ is an $S$-unit. Note that Lemma~\ref{L:e2_bounds} provides us with valuation bounds on $\alpha$ and that the coset represented only depends on the value of $\alpha$ in $R_S^\times/R^\times$. Therefore, we only have to consider finitely many representatives for $\alpha$.

Similarly, for $\beta$ we have that Lemma~\ref{L:beta_lower_bound} provides lower bounds on the valuations of $\beta$ and that the coset represented only depends on the value of $\beta$ in $K/\alpha^{-1}R$, which only leaves us with finitely many candidates.
\end{proof}

\begin{remark} Note that $R_S^\times/R$ is also finitely generated if the residue fields of $R$ are not finite. We only use that $K$ is global for establishing that finitely many representatives for $\beta$ suffice. However, note that the lower bounds provided by Lemma~\ref{L:beta_lower_bound} only give \emph{necessary} conditions. It may well be that the full problem \eqref{E:V} is so restrictive that any solution would lead to one of finitely many cosets regardless of the finiteness of the residue field. As a concrete question, one may ask:
\end{remark}

\begin{question} Let $k$ be a field, let $R=k[[t]]$ be the ring of formal power series and let $K=k(\!(t)\!)$ be the corresponding field of Laurent series. Does there exist a minimal model $[F,G]\in M_d(K)$ such that $\mathrm{MinTran}_R([F,G])$ is not a finite union of left cosets of $(\GG_m\times \GL_2)(R)$?
\end{question}

\section{Orbits of rational functions containing many integer points}\label{S:intpoints}

In this section we restrict to $R=\ZZ$ and $K=\QQ$. In order to obtain a concept of integrality on $\PP^1(\QQ)$, we fix a point at infinity and consider $\ZZ\subset\QQ\subset\PP^1(\QQ)=\QQ\cup\{\infty\}$. Let $\phi\in\QQ(z)$ be a rational map on $\PP^1(\QQ)$. For a point $\alpha\in\PP^1(\QQ)$ we consider the \emph{forward orbit}
\[\calO_\phi(\alpha)=\{\alpha,\phi(\alpha),\phi^2(\alpha),\ldots\},\]
where $\phi^k=\phi\circ\cdots\circ\phi$ means composition of $\phi$ with itself. We say that $\alpha$ is a \emph{wandering point} if $\calO_\phi(\alpha)$ is an infinite set. In direct analogy with Siegel's theorem that a curve of genus one has only finitely many integral points, we have 

\begin{theorem}[\cite{sil:intpoints}*{Theorem A}, \cite{ADS}*{Theorem 3.43}]
Let $\phi(z)\in\QQ(z)$ be a rational map of degree $d\geq 2$ such that $\phi^2(z)\notin\QQ[z]$. Let $\alpha\in\QQ$ be a wandering point for $\phi$. Then $\calO_\phi(\alpha)$ contains only finitely many integer points.
\end{theorem}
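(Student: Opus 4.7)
The plan is to derive a contradiction from the assumption that $\calO_\phi(\alpha)$ contains infinitely many integers, by combining the exponential growth of heights under iteration of $\phi$ with a Diophantine approximation bound of Roth type.

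First I would set up a convenient reduction. Since $\phi^2 \notin \QQ[z]$, after possibly replacing $\phi$ by an iterate (which only restricts $\calO_\phi(\alpha)$ to a sub-orbit, so still suffices for the finiteness conclusion), I may assume that $\phi$ itself has a pole at some point $\gamma \in \Qbar$ with $\gamma \neq \infty$. Set $\beta_n = \phi^n(\alpha)$ and suppose for contradiction that $\beta_n \in \ZZ$ for infinitely many $n$.

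Next I would invoke the canonical height $\hat h_\phi$ attached to $\phi$. Since $\alpha$ is a wandering point, $\hat h_\phi(\alpha) > 0$, and the functional equation $\hat h_\phi(\phi(x)) = d\,\hat h_\phi(x)$ combined with $\hat h_\phi = h + O(1)$ yields $h(\beta_n) = d^n\hat h_\phi(\alpha) + O(1)$. In particular $h(\beta_n) \to \infty$, while $h(\beta_{n-1}) \approx d^{-1}h(\beta_n)$. The core step is a place-by-place $v$-adic analysis. At each place $v$ of $\QQ(\gamma)$, one relates the size of $|\beta_n|_v$ to the $v$-adic distance of $\beta_{n-1}$ to the poles of $\phi$ (i.e.\ to the Galois conjugates of $\gamma$). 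When $\beta_n \in \ZZ$, all its non-archimedean absolute values are bounded by $1$, so the growth in the height of $\beta_n$ is concentrated at the archimedean place, which in turn forces $\beta_{n-1}$ to approximate some Galois conjugate of $\gamma$ very well in the product sense. Carrying out the bookkeeping, I would aim to show that
\[ \prod_v \min_{\sigma}\bigl|\beta_{n-1} - \sigma(\gamma)\bigr|_v \;\leq\; H(\beta_{n-1})^{-(2+\epsilon)} \]
for infinitely many $n$, where $\sigma$ ranges over embeddings of $\QQ(\gamma)$. By Roth's theorem (in its several-places form, due to Ridout and Lang), this is impossible for all but finitely many $\beta_{n-1} \in \QQ$, yielding the contradiction.

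The main obstacle is making the $v$-adic comparison sharp enough to secure an exponent strictly greater than $2$ in the approximation inequality. This requires careful tracking of the multiplicity of $\gamma$ as a pole, the contribution of the other poles of $\phi$ at each place, and the ramification of places of $\QQ(\gamma)$ over $\QQ$. The hypothesis $\phi^2 \notin \QQ[z]$ enters precisely to guarantee the existence of a finite algebraic pole $\gamma$: without it, iteration could drive the orbit to $\infty$ through a polynomial dynamics in which no finite algebraic target ever needs to be approximated, and no Diophantine obstruction would be available.
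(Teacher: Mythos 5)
This theorem is not proved in the paper; it is quoted verbatim from Silverman \cite{sil:intpoints}*{Theorem A} (see also \cite{ADS}*{Theorem 3.43}) as background before the experimental search. So the comparison here is against Silverman's original argument, not against anything in this paper.

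Your overall strategy --- heights growing geometrically along the orbit, plus a Roth/Ridout-type Diophantine approximation bound applied to poles of $\phi$ --- is indeed the architecture of Silverman's proof, so you have the right tools. But there is a real gap in how you set up the Roth application, and your explanation of the role of the hypothesis $\phi^2\notin\QQ[z]$ is not correct. First, that hypothesis is \emph{not} needed to produce a finite pole: already $\phi^2\notin\QQ[z]$ forces $\phi\notin\QQ[z]$, so $\phi$ has a finite pole regardless; conversely, replacing $\phi$ by a higher iterate cannot manufacture a finite pole if one didn't exist. Second, and more seriously, pulling back only one step as you propose cannot yield an approximation exponent exceeding $2$ in general. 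If $\gamma$ is a pole of $\phi$ of order $e$, then near $\gamma$ one has $|\phi(z)|\asymp|z-\gamma|^{-e}$, while $h(\beta_n)=d\,h(\beta_{n-1})+O(1)$; combining these gives $|\beta_{n-1}-\gamma|\ll H(\beta_{n-1})^{-d/e+o(1)}$. For $d=2$ this exponent is at most $2$ (and equals $1$ if $e=2$), so Roth gives nothing. The same failure occurs for $d=3$ whenever $e\geq 2$. Thus the single-step reduction you propose collapses precisely in the low-degree cases the conjecture in this paper is about.

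What actually makes the argument work in Silverman's proof is a \emph{multi-step} pullback: one shows that an integer $\beta_n$ forces $\beta_{n-m}$ to approximate a point of $\phi^{-m}(\infty)$ with exponent roughly $d^m/e_m$, where $e_m$ is the ramification of $\phi^m$ along the relevant backward orbit. The hypothesis $\phi^2\notin\QQ[z]$ is exactly the condition that $\infty$ is \emph{not an exceptional point} of $\phi$. The classification of exceptional points then guarantees that $\#\phi^{-m}(\infty)\geq 3$ once $m$ is large enough, and --- crucially --- that the ramification $e_m$ grows strictly more slowly than $d^m$, so that $d^m/e_m\to\infty$. Choosing $m$ large enough therefore pushes the approximation exponent past $2$, at which point Roth's theorem applies. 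Without this ramification bookkeeping the argument does not close, and if $\phi^2\in\QQ[z]$ the ramification $e_m$ equals $d^m$ for all $m$ (total ramification at $\infty$ persists under iteration), so the exponent never improves --- consistent with the genuine counterexamples such as $\phi(z)=z^2$. You flagged the exponent issue as ``the main obstacle,'' which is accurate; the missing ingredient is precisely this analysis of $\phi^{-m}(\infty)$ and the ramification of $\phi^m$, governed by the non-exceptionality of $\infty$.
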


The following example shows that, just as elliptic curves can have arbitrarily many integer points (see for instance \cite{mahler:latticepoints}), we can construct rational maps with arbitrarily many integer points in their orbits too.

\begin{example}[See \cite{ADS}*{Example~3.45}] Let $\phi(z)=(z^2+z+1)/(z^2-z+1)$. Then $\calO_\phi(0)=\{0,1,3,13/7,\ldots\}$. We can construct another rational map with more integer points in its orbit by scaling the denominator out. Consider $\psi(z)=7\phi(z/7)$ with $\calO_\psi(0)=\{0,7,21,13,2163/127,\ldots\}$. We can iteratively scale out consecutive denominators and construct rational functions with arbitrarily many integer points in their orbits.
\end{example}

In the example above we have $[\phi]=[\psi]\in\calM_2(\QQ)$. The associated models have
$\Res_2(z^2+z+1,z^2-z+1)=4$ and $\Res_2(7x^2 + 49x + 343, x^2 - 7x + 49)=4\cdot7^6$, so the function obtained by scaling is not given by a minimal model.

Analogous to a conjecture by Dem'janenko-Lang \cite{lang:ec_da}*{p.\ 140} on uniform bounds on the number of integral points on minimal Weierstrass models of elliptic curves, Silverman conjectures

\begin{conjecture}[\cite{ADS}*{Conjecture~ 3.47}] For $d\geq 2$ there is a constant $C_d$ such that for any rational map $\phi\in\Rat_d(\QQ)$ such that $\phi^2$ is not a polynomial given by a model $[F,G]\in M_d(\QQ)$ that is $\ZZ$-minimal for $[\phi]\in\calM_d(\QQ)$ and any wandering point $\alpha$, we have that $\calO_\phi(\alpha)$ contains at most $C_d$ integer points.
\end{conjecture}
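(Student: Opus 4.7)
The plan is to adapt the height-theoretic proof of Silverman's non-uniform Siegel-type theorem (\cite{ADS}*{Theorem 3.43}) and to extract, from the $\ZZ$-minimality hypothesis, the quantitative control needed to make the bound depend only on $d$. The central object is the dynamical canonical height
\[
\hat h_\phi(P)=\lim_{n\to\infty} d^{-n}\,h(\phi^n(P)),
\]
which satisfies $\hat h_\phi(\phi(P))=d\,\hat h_\phi(P)$ and $\hat h_\phi=h+O_\phi(1)$. For a wandering point $\alpha$ one has $\hat h_\phi(\alpha)>0$, so the orbit heights grow like $d^n\hat h_\phi(\alpha)$.

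The main estimate I would try to prove is a \emph{uniform} comparison $|h-\hat h_\phi|\leq \gamma_d$, where $\gamma_d$ depends only on $d$ and not on the particular minimal $\phi$. One decomposes $\hat h_\phi = \sum_v \hat\lambda_{\phi,v}$ into local canonical heights; at a non-archimedean place $v$, the deviation $\lambda_{\phi,v}-\lambda_v$ is controlled by $v(\Res_d(F,G))$, while at the archimedean place one must bound the contribution in terms of the coefficients of $[F,G]$. For a minimal model the resultant is as small as the class $[\phi]$ allows, and by Lemma~\ref{L:trivial_minimal} the primes of bad reduction are constrained. The archimedean contribution, however, is not pinned down by minimality alone, and this is where I would need to combine the argument with an $\SL_2(\ZZ)$-reduction step producing a representative of the $(\GG_m\times\GL_2)(\ZZ)$-orbit of $[F,G]$ with coefficients controlled by $|\Res_d(F,G)|$, in the spirit of the structural results of Section~\ref{S:mintran}. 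Given such a uniform comparison, integers $\beta\in\calO_\phi(\alpha)\cap\ZZ$ satisfy $\hat h_\phi(\beta)=\log\max(|\beta|,1)+O(\gamma_d)$, and one applies Roth's theorem (or the Schmidt subspace theorem, in the form used by Silverman) to the pairs $(\beta,\infty)$ to bound how often $\phi^n(\alpha)$ can be integral: a Roth-type gap principle forces the exceptional $n$ to lie in an interval of length $O_d(1)$, yielding the desired $C_d$.

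The hard part will be the uniform height comparison, and this is essentially the dynamical analogue of the open Lang--Dem'janenko bound for integer points on elliptic curves: in the elliptic setting the corresponding statement $|h-\hat h|\leq c(E)$ has $c(E)$ depending on the minimal model, and making $c$ depend only on the field is itself conjectural (it would follow from a suitable form of Szpiro's conjecture). In our setting Section~\ref{S:szpiro} already shows that a naive Szpiro-type inequality $\Cond^n\,|\,\Res$ fails, so the input to the archimedean estimate cannot be a direct conductor/resultant comparison and must instead exploit finer invariants of the minimal model (for instance, bounding the heights of the preperiodic points and of the critical orbit). Because of this, I do not expect a complete proof to be within reach; a realistic deliverable is the explicit lower bounds $C_2\geq 8$ and $C_3\geq 10$ obtained via Algorithm~\ref{A:globalminimalmodel} in Section~\ref{S:intpoints}, together with the conditional reduction above.
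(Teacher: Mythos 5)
The statement you were asked about is a \emph{conjecture} (Silverman's Conjecture~3.47, reproduced here as a motivating goal), and the paper does not prove it: its contribution is an algorithm for computing minimal models, an experimental search giving the explicit lower bounds $C_2\geq 8$ and $C_3\geq 10$, and some structural remarks. You correctly recognized this, and your concluding sentence -- that a realistic deliverable is the lower bounds from Section~\ref{S:intpoints} together with a conditional reduction -- is exactly what the paper delivers. So there is no mismatch to flag.

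Your sketch of a possible attack is the standard one in the field: local decomposition of $\hat h_\phi$, uniform control of the non-archimedean local terms via minimality, and a Roth-type gap argument. Two comments on where it would need more care. First, the non-archimedean deviation $\hat\lambda_{\phi,v}-\lambda_v$ is bounded by $v(\Res_d(F,G))$ for the \emph{given} model, and minimality does not make $v(\Res_d(F,G))$ small -- Section~\ref{S:szpiro} shows it can be arbitrarily large even at a single prime -- so minimality alone does not give $\sum_v$ of these bounded by a constant depending only on $d$. Second, and as you flag, the archimedean place is not constrained by resultant data at all; a genuine uniform height comparison would have to exploit a reduction theory on $M_d/(\GG_m\times\GL_2)(\ZZ)$ far beyond the coset counting in Section~\ref{S:mintran}. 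Both points make the ``uniform comparison'' step essentially equivalent to the conjecture itself, which is why your honest conclusion (lower bounds only, full statement open) is the right one.
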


Silverman makes a conjecture that is a priori stronger by demanding that $\phi$ is only \emph{affine minimal}, but Proposition~\ref{P:GL_and_affine_minimal} shows that over $\ZZ$ this formulation is equivalent. In \cite{sil:intpoints} he also mentions an example $\phi(z)=(-54z^2+16z+128)/(z^2-41z+64)$ for which $\calO_\phi(0)$ contains at least $7$ integer values. Unfortunately, $\psi(z)=\phi(8z)/8=(-54z^2 + 2 + 2)/(8z^2 - 41z + 8)$ has a smaller resultant, so $\phi$ is not (affine) minimal.

In the same paper Silverman also mentions that it would be interesting to exhibit minimal rational functions of degree 2 with at least $8$ integer points in an orbit. We describe one approach to finding such functions.

First we remark that a simple interpolation argument shows that a sufficiently long initial part of a wandering orbit determines a rational function of given degree uniquely. Suppose that $\phi(z)=f(z)/g(z)$ is a rational function of degree $d$ with orbit $\{c_0,\ldots,c_r,\ldots\}$. Then the coefficients $f_d,\ldots,f_0,g_d,\ldots,g_0$ satisfy the linear system
\begin{equation}\label{E:c_system}
\mat{c_0^d&\cdots&1&-c_1c_0^d&\cdots&-c_1\\
 c_1^d&\cdots&1&-c_2c_1^d&\cdots&-c_2\\
\vdots&&\vdots&\vdots&&\vdots\\
 c_{r-1}^d&\cdots&1&-c_{r}c_{r-1}^d&\cdots&-c_{r}}
\mat{f_d\\\vdots\\f_0\\g_d\\\vdots\\g_0}=0.
\end{equation}
Indeed, setting $c_0=0$, the affine plane $\AA^{2d+1}$ with coordinates $(c_1,\ldots,c_{2d+1})$ is birational to $\Rat_d$. There are some obvious loci on which this birationality is not defined. For instance, when $c_i=c_j$ for $i\neq j$ or when a significant part of the orbit already fits a lower degree function, for example $d=2$ and $(c_1,\ldots,c_5)=(1,3,7,15,c_5)$.

In particular, we see that in order for $\{c_0,\ldots,c_{2d+2}\}$ to be an orbit of a degree $d$ function, the matrix in \eqref{E:c_system} must have determinant $0$. This leads to a relation
\[N(c_0,\ldots,c_{2d+1})-c_{2d+2} D(c_0,\ldots,c_{2d+1})=0\text{ with }
N,D\in\ZZ[c_0,\ldots,c_{2d+1}.]\]
Furthermore, $N$ is of total degree $(d+1)^2$ and $D$ is of total degree $d(d+2)$.
Both $N$ and $D$ are of degree $d+1$ in each of $c_1,\ldots,c_{2d+1}$ and of degree $d$ in $c_0$.

A reasonable strategy to find rational maps with an orbit containing many integers is now to set a bound $B>0$, choose $c_0,\ldots,c_{2d+1}\in\{-B,\ldots,B\}$ and see for which values we have that $D(c_0,\ldots,c_{2d+1})$ divides $N(c_0,\ldots,c_{2d+1})$. To reduce the search we can restrict to $c_0=0$ and $c_1>0$. For each of the found vectors $(0,c_1,\ldots,c_{2d+2})$ we check if there is indeed a corresponding degree $d$ rational function and whether the resulting model is minimal using Algorithm~\ref{A:globalminimalmodel}.

For $d=2$ it turns out that $N$ has $70$ monomials and largest coefficient $4$ and $D$ has $76$ monomials with largest coefficient $3$. Since $76\cdot3\cdot 100^8 < 2^{63}$ and $4\cdot 100^9 < 2^{63}$, we can take $B=100$ and do the divisibility test with word-sized integers on a $64$-bit machine, provided we reduce the terms of $N$ modulo the value of $D$ before adding them. This approach allowed us to test the roughly $1.5\cdot10^{11}$ candidates with $c_1\in\{1,\ldots,100\}$ and $c_2,\ldots,c_5\in\{-100,\ldots,100\}$ in about 4 days on a 2.33GHz machine. We used Cython \cite{cython} and Sage \cite{sage} for the implementation of the computer program. Our findings are summarized in Table~\ref{T:deg2}. A full list of orbits found is available electronically from \cite{brumol:electronic}.

In order to prove that the orbit of $0$ is indeed infinite we make use of the following result.
\begin{theorem}[\cite{ADS}*{Theorem~2.21} or  \cite{morsil:perpoints}*{Theorem~1.1}]\label{T:torbound}
Let $\phi\in \Rat_d(\QQ)$ and let $[F,G]$ be a model of $\phi$ over $\ZZ$. Let $p$ be a prime not dividing  $\Res_d(F,G)$. Then there is an explicit procedure to produce a finite set $M(\phi,p)$ such that for any $\alpha\in\PP^1(\QQ)$ such that $\alpha$ is a periodic point under $\phi$, we have that
\[\phi^k(\alpha)=\alpha \text{ for some }k\in\{mp^e: m \in M(\phi,p),\;e\in\{0,1,\ldots\}\}.\]
The construction guarantees that no element of $M(\phi,p)$ is divisible by a prime bigger than $p+1$.
\end{theorem}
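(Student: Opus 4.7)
My plan is to exploit good reduction at $p$ and then do local analysis at a fixed point of an appropriate iterate.

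\textbf{Step 1 (good reduction and period descent).} Since $p\nmid \Res_d(F,G)$, the reduced forms $\overline F,\overline G\in\FF_p[X,Y]$ still cut out a morphism $\overline\phi\in\Rat_d(\FF_p)$ of the same degree, and the usual commutativity $\overline{\phi(\alpha)}=\overline\phi(\overline\alpha)$ holds on $\PP^1(\QQ)\to\PP^1(\FF_p)$. If $\alpha$ has exact period $k$ under $\phi$, then $\overline\alpha$ has some exact period $m$ under $\overline\phi$, and $m\mid k$. Moreover the $\overline\phi$-orbit of $\overline\alpha$ sits in $\PP^1(\FF_p)$, so $m\leq p+1$; in particular every prime dividing $m$ is $\leq p+1$.

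\textbf{Step 2 (reduction to a fixed point).} Put $\psi=\phi^m$, so that $\overline\alpha$ is a fixed point of $\overline\psi$ and $\alpha$ is a periodic point of $\psi$ of period $n:=k/m$. After conjugating by a matrix in $\GL_2(\ZZ_{(p)})$ (which preserves good reduction and permutes the construction of $M(\phi,p)$ harmlessly) I may assume $\alpha$ reduces to the origin of an affine chart. Write $\psi$ as a power series $\psi(z)=\lambda z+a_2z^2+\cdots$ with $\lambda,a_i\in\ZZ_{(p)}$.

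\textbf{Step 3 (local multiplier analysis).} The multiplier $\overline\lambda\in\FF_p$ of $\overline\psi$ at $\overline\alpha$ is an invariant of the cycle, so computable from $\overline\phi$ alone. I now chase the three cases in the standard Morton–Silverman style. If $\overline\lambda\notin\{0,1\}$, let $r$ be the multiplicative order of $\overline\lambda$ in $\FF_p^\times$, so $r\mid p-1$; then $\psi^r$ has multiplier $\equiv 1\pmod p$ at $\alpha$, and $\psi^j(\alpha)=\alpha$ forces $r\mid j$. If $\overline\lambda=1$, take $r=1$. In both sub-cases the formal expansion of $\psi^r$ around $\alpha$ has the form $z+bz^s+O(z^{s+1})$ with $b\in\ZZ_{(p)}$, and a direct iteration shows that the $j$-fold composite $\psi^{rj}$ has leading perturbation $jbz^s$ (up to a $p$-adic unit), so $\alpha$ can be periodic only when $p\mid j$; an induction in Newton–polygon fashion then forces $n/r$ to be a power of $p$. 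In the remaining case $\overline\lambda=0$, the map $\psi$ is contracting on the open disk reducing to $\overline\alpha$, so $\alpha$ cannot be periodic unless $n=1$, giving $n=r$ with $r=1$. In every case one concludes $n=r\,p^e$ with $r\mid p-1$ (and $r=1$ allowed).

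\textbf{Step 4 (building $M(\phi,p)$).} Combining Steps~1 and 3 gives $k=m\,r\,p^e$ where $m$ runs through the exact periods of cycles of $\overline\phi$ on $\PP^1(\FF_p)$ and $r$ is either $1$ or the order of the cycle-multiplier of $\overline\phi^m$ in $\FF_p^\times$. Both are computed explicitly from the finite dynamical system $\overline\phi$ on $\PP^1(\FF_p)$; I take $M(\phi,p)$ to be the resulting finite set of products $m\cdot r$. Since every such $m$ satisfies $m\leq p+1$ and every such $r$ divides $p-1$, no element of $M(\phi,p)$ has a prime factor larger than $p+1$, as required. The main technical obstacle is the local analysis in Step~3, where one must carefully track how the leading coefficient of $\psi^r(z)-z$ evolves under further iteration to show that only $p$-th powers can appear beyond the multiplier-order $r$; everything else is bookkeeping.
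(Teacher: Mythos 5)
The paper does not actually prove this theorem; it cites it verbatim from Silverman's \emph{Arithmetic of Dynamical Systems} (Theorem 2.21) and Morton--Silverman (Theorem 1.1), so there is no in-paper proof to compare against. Your outline does follow the strategy of those sources: reduce mod~$p$ to bound the residual period $m$ by $\#\PP^1(\FF_p)=p+1$, pass to $\psi=\phi^m$, and then do a $p$-adic local analysis at the fixed point $\overline\alpha$ of $\overline\psi$ to show the remaining factor $n/m$ lies in $\{1,\,r,\,rp^e\}$ with $r\mid p-1$.

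However, Step~3 as written has a genuine gap. You expand $\psi(z)=\lambda z+a_2z^2+\cdots$, which presupposes $\psi(0)=0$, i.e.\ that $\psi=\phi^m$ already fixes a rational point in the residue disk of $\overline\alpha$. But what you actually have is $\psi(0)\equiv 0\pmod p$: the point $\alpha$ has period $n/m$ under $\psi$, and for $n/m>1$ there is no obvious fixed point to center the expansion at. When $\overline\lambda\neq 1$ this can be repaired by Hensel's lemma (the derivative of $z\mapsto\psi(z)-z$ reduces to $\overline\lambda-1\neq 0$, so there is a unique fixed point of $\psi$ in the disk over $\ZZ_p$, and one expands there); but when $\overline\lambda=1$ such a fixed point need not exist (e.g.\ $z\mapsto z+p$), and the argument has to be rephrased in terms of the cycle multiplier $\prod_i\psi'(\alpha_i)$ rather than a Taylor series at a single fixed point. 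Relatedly, the claim that $\psi^{rj}$ has ``leading perturbation $jbz^s$'' up to a unit --- the heart of the $p$-power step --- is precisely the content of the nontrivial power-series lemma in Morton--Silverman; it must actually be proved, tracking how $s$ and the valuation of the leading coefficient change when $p\mid j$, rather than asserted. With those two repairs your sketch becomes the proof in the cited references.
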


A consequence of this theorem is that if $p_0\geq 3$ is a prime of good reduction for $\phi$, then no primes bigger than $p_0$ will divide the period of any periodic rational point, so if we take good primes $3\leq p_0<p_1<\cdots<p_r$ and compute
\[ M=\bigcap_{i=1}^r M(\phi,p_i),\]
then any point $\alpha\in\PP^1(\QQ)$ periodic under $\phi$ is a solution to $\phi^k(z)=z$ for some $k\in M$. The nature of the explicit procedure yields that $M$ is likely very small for even small values of $r>2$, so one can find all rational periodic points by solving a finite and likely small number of polynomial equations.
We can find all rational preperiodic points by computing the rational points in the inverse orbits of the periodic points. This is a matter of iteratively solving equations of the form $\phi(z)=\alpha$ for appropriate $\alpha$.
We can check that $0$ is a wandering point by verifying it does not occur in the list of preperiodic points we construct above. See \cite{brumol:electronic} for an implementation of this procedure.

For each of the $2190$ minimal rational functions for which the initial $7$ members of the orbit of $0$ are integral, we checked whether there are any further integers early in the orbit. We found $4$ functions where the orbit starts with $8$ integers and a fifth function with $8$ integers, but not in consecutive spots. See Table~\ref{T:deg2examples}.

We also used this strategy to find degree $3$ rational functions with many integers in the orbit of $0$. Using the same approach as for degree $2$ functions, we find that we can prescribe orbits  $[0,c_1,\ldots,c_7]$ with $c_1\in\{1,\ldots,10\}$ and $c_2,\ldots,c_7\in\{-10,\ldots,10\}$. Again, we can express $c_8=N(c_1,\ldots,c_7)/D(c_1,\ldots,c_7)$, where $N$ has total degree $16$ and $D$ has total degree $15$. Searching through tuples of distinct integers $(c_1,\ldots,c_7)$ in this range such that $D(c_1,\ldots,c_7)$ divides $N(c_1,\ldots,c_7)$ took about 31 hours. Again, we check the resulting tuples for minimality, polynomials and preperiodic orbits. Our findings are summarized in Table~\ref{T:deg3}. See \cite{brumol:electronic} for all found orbits.

For each of the $6508$ resulting functions we found that $28$ functions had a tenth integer in the orbit of $0$ and $25$ functions had an integer preimage for $0$. However, $11$ of these are translates of other functions, so we find $42$ minimal degree $3$ functions with at least $10$ integers consecutively in an orbit. 
We also found $6$ examples where a tenth integer point occurred after a non-integral or an infinite value. See  \cite{brumol:electronic} for a full list and Table~\ref{T:deg3examples} for a small sample.

\section*{Acknowledgements}

We are particularly grateful for the generous support of the 2010 Arizona Winter School and in particular Joseph Silverman who carefully prepared lectures and assignments. The work described here drew inspiration from one of the problems he set for the school.
We also thank an anonymous referee for inquiring about examples in non-principal domains, which led to a strengthening of Proposition~\ref{P:GL_and_affine_minimal} and Example~\ref{E:no_affine_minimal_model}.

\begin{bibdiv}
\begin{biblist}
\bib{cython}{report}{
  label={Cython},
  author={Behnel, Stephan},
  author={Bradshaw, Robert},
  author={Ewing, Greg},
  author={Seljebotn, Dag Sverre},
  author={others},
  title={Cython: {C}-{E}xtensions for {P}ython},
  eprint={http://www.cython.org},
  date={2009},
}

\bib{magma}{article}{
  label={Magma},
  title={The Magma algebra system. I. The user language},
  author={Bosma, The MAGMA computer algebra system is described in Wieb},
  author={Cannon, John},
  author={Playoust, Catherine},
  journal={J. Symbolic Comput.},
  volume={24},
  number={3--4},
  pages={235--265},
  date={1997},
}

\bib{brumol:electronic}{report}{
  author={Bruin, Nils},
  author={Molnar, Alexander},
  title={Electronic resources},
  date={2012},
  eprint={http://www.cecm.sfu.ca/~nbruin/intorbits},
}

\bib{lang:ec_da}{book}{
  author={Lang, Serge},
  title={Elliptic curves: Diophantine analysis},
  series={Grundlehren der Mathematischen Wissenschaften [Fundamental Principles of Mathematical Sciences]},
  volume={231},
  publisher={Springer-Verlag},
  place={Berlin},
  date={1978},
  pages={xi+261},
  isbn={3-540-08489-4},
  review={\MR {518817 (81b:10009)}},
}

\bib{mahler:latticepoints}{article}{
  author={Mahler, Kurt},
  title={On the lattice points on curves of genus 1},
  journal={Proc. London Math. Soc., II. Ser.},
  volume={39},
  date={1935},
  pages={431--466},
  doi={10.1112/plms/s2-39.1.431},
}

\bib{molnar:msc}{thesis}{
  author={Molnar, Alexander},
  title={Fractional linear minimal models of rational functions},
  date={2011},
  institution={Simon Fraser University},
  type={M.Sc.~ Thesis},
}

\bib{morsil:perpoints}{article}{
  author={Morton, Patrick},
  author={Silverman, Joseph H.},
  title={Rational periodic points of rational functions},
  journal={Internat. Math. Res. Notices},
  date={1994},
  number={2},
  pages={97--110},
  issn={1073-7928},
  review={\MR {1264933 (95b:11066)}},
  doi={10.1155/S1073792894000127},
}

\bib{sil:intpoints}{article}{
  author={Silverman, Joseph H.},
  title={Integer points, Diophantine approximation, and iteration of rational maps},
  journal={Duke Math. J.},
  volume={71},
  date={1993},
  number={3},
  pages={793--829},
  issn={0012-7094},
  review={\MR {1240603 (95e:11070)}},
  doi={10.1215/S0012-7094-93-07129-3},
}

\bib{sil:FOMFOD}{article}{
  author={Silverman, Joseph H.},
  title={The field of definition for dynamical systems on $\mathbf P^1$},
  journal={Compositio Math.},
  volume={98},
  date={1995},
  number={3},
  pages={269--304},
  issn={0010-437X},
  review={\MR {1351830 (96j:11090)}},
}

\bib{ADS}{book}{
  author={Silverman, Joseph H.},
  title={The arithmetic of dynamical systems},
  series={Graduate Texts in Mathematics},
  volume={241},
  publisher={Springer},
  place={New York},
  date={2007},
  pages={x+511},
  isbn={978-0-387-69903-5},
  review={\MR {2316407 (2008c:11002)}},
  doi={10.1007/978-0-387-69904-2},
}

\bib{STW:resultant_conductor}{report}{
  author={Szpiro, Lucien},
  author={Tepper, Michael},
  author={Williams, Phillip},
  title={Resultant and conductor of geometrically semi-stable self maps of the projective line over a number field or function field},
  date={2010},
  eprint={http://arxiv.org/abs/1010.5030},
}

\bib{sage}{report}{
  label={Sage},
  author={Stein, W.A.},
  author={others},
  organization={The Sage Development Team},
  title={{S}age {M}athematics {S}oftware ({V}ersion 4.7)},
  eprint={http://www.sagemath.org},
  date={2011},
}

\bib{tate:algorithm}{article}{
  author={Tate, J.},
  title={Algorithm for determining the type of a singular fiber in an elliptic pencil},
  conference={ title={Modular functions of one variable, IV (Proc. Internat. Summer School, Univ. Antwerp, Antwerp, 1972)}, },
  book={ publisher={Springer}, place={Berlin}, },
  date={1975},
  pages={33--52. Lecture Notes in Math., Vol. 476},
  review={\MR {0393039 (52 \#13850)}},
}

\end{biblist}
\end{bibdiv}
\begin{table}
\[
\begin{array}{c|l}
\phi(z)&\calO_\phi(0)\\
\hline
\dfrac{86z^2 - 1068z - 338}{z^2 + 7z - 338}&[0,\, 1,\, 4,\, 11,\, 12,\, 7,\,15,\,-374,\, \ldots]\\[0.5em]
\dfrac{-61z^2 - 1279z + 1862}{4z^2 + 114z + 266}&[0,\, 7,\, -8,\, -21,\, -5,\, -33,\,-26,\,-1020,\, \ldots]\\[0.5em]
\dfrac{25z^2 - 1895z - 8910}{58z^2 - 146z - 990}&[0,\, 9,\, -10,\, 2,\, 12,\, -5,\,1,\,10,\, \ldots]\\[0.5em]
\dfrac{367z^2 - 15104z + 143325}{12z^2 - 469z + 4095}&[0,\, 35,\, 27,\, 17,\, 18,\, 21,\,26,\,-99,\, \ldots]\\[0.5em]
\dfrac{12z^2 - 29z - 35}{z^2 + 8z - 35}&[0,\, 1,\, 2,\, 3,\, 7,\, 5,\,4,\,\frac{41}{13},\,-40,\,\ldots]
\end{array}
\]
\caption{Some explicit degree $2$ functions with $8$ integers in an orbit}\label{T:deg2examples}
\begin{center}
\begin{tabular}{| p{40ex} | r | }
  \hline
  Size of search space &  150 617 612 376\\ \hline
  Orbits with a $7$-th integer point & 2 112 933 \\ \hline
  Orbits corresponding to minimal maps & 2 261 \\ \hline
  Preperiodic orbits & 64\\ \hline
  Polynomials & 7\\ \hline
  Non-polynomial, infinite orbits with at least 7 integer points in the orbit of 0& 2 190\\
  \hline
\end{tabular}
\end{center}
\caption{Search results for rational functions of degree 2 with many integers in the orbit of 0}\label{T:deg2}
\[
\begin{array}{c|l}
\phi(z)&\calO_\phi(0)\\
\hline
\dfrac{7z^3 - 41z^2 - 216z + 180}{2z^3 - z^2 - 21z + 90}&[0,\,2,\,-6,\,6,\,-3,\,3,\,-9,\,5,\,-5,\,8,\ldots]\\[0.5em]
\dfrac{-6z^3 - 10z^2 + 29z - 3}{z^3 - 8z - 3}&[0,\, 1,\, -1,\, -9,\, -5,\, -4,\, -3,\, 3,\,\infty,\,-6,\ldots]\\[0.5em]
\dfrac{35z^3 - 219z^2 + 292z + 60}{5z^3 - 18z^2 - 26z + 60}&[0,\, 1,\, 8,\, 5,\, 4,\, 3,\, 2,\, -2,\,\infty,\,7,\ldots]\\[0.5em]
\dfrac{-24z^3 + 285z^2 - 825z + 252}{z^3 + 15z^2 - 142z + 126}&[0,\, 2,\, 5,\, -3,\, 9,\, -2,\, 7,\, 1,\,\infty,\,-24,\ldots]\\[0.5em]
\end{array}
\]
\caption{Some explicit degree $3$ rational functions with $10$ integers in an orbit}\label{T:deg3examples}
  \begin{center}
    \begin{tabular}{| p{45ex} | r | }
     \hline
    Size of search space & 195 350 400 \\ \hline
    Orbits with a $9$-th integer point & 44 563 \\ \hline
    Orbits belonging to minimal maps & 7 631 \\ \hline
    Orbits corresponding to non-degree 3 maps & 3\\ \hline
    Degree 3 polynomial orbits & 0\\ \hline
    Degree 3, preperiodic orbits & 913\\ \hline
    Degree 3 non-preperiodic, orbits with at least 9 integer points in the orbit of 0& 6 508\\ 
    \hline
    \end{tabular}
  \end{center}
\caption{Degree 3 functions with many integer points in the orbit of $0$}\label{T:deg3}
\end{table}

\end{document}